\documentclass[12pt]{amsart}
\usepackage[dvips]{graphicx}
\usepackage{amsmath,graphics}
\usepackage{amsfonts,amssymb}
\usepackage{xypic}
\theoremstyle{plain}

\newtheorem{theorem}{Theorem}[section]
\newtheorem{lemma}[theorem]{Lemma}
\newtheorem{corollary}[theorem]{Corollary}
\newtheorem{proposition}[theorem]{Proposition}

 \theoremstyle{remark}
 \newtheorem{remark}[theorem]{Remark}
\textwidth 6in    
\oddsidemargin.25in    
\evensidemargin.25in     
\marginparwidth=.85in

\newcommand{\al}{\alpha}
\newcommand{\be}{\beta}

\newcommand{\ga}{\gamma}
\newcommand{\ep}{\epsilon}

\newcommand{\la}{\lambda}

\newcommand{\ZZ}{{\mathbb Z}}

\newcommand{\CC}{{\mathbb C}}
\newcommand{\NN}{{\mathbb N}}

\newcommand{\QQ}{{\mathbb Q}}
\newcommand{\SL}{{\rm SL}}
\newcommand{\GL}{{\rm GL}}
\newcommand{\tr}{\operatorname{\it tr}}
\newcommand{\rk}{\operatorname{rk}}
\newcommand{\Aut}{\operatorname{Aut}}
\newcommand{\Hom}{\operatorname{Hom}}
\newcommand{\Tor}{\operatorname{Tor}}
\newcommand{\sm}{{\setminus}}
\newcommand{\notdiv}{{\not  | }}

\hyphenation{metabel-ian}

\begin{document}
\title{Metabelian SL$(n,\CC)$ representations of knot groups}

\author{Hans U. Boden}
\address{Department of Mathematics, McMaster University,
Hamilton, Ontario} \email{boden@mcmaster.ca}
\thanks{The first named author was supported by a grant from the Natural Sciences and Engineering Research Council of Canada.}

\author{Stefan Friedl}
\address{\begin{tabular}{l} Universit\'e du Qu\'ebec \`a Montr\'eal, Montr\'eal, Qu\'ebec,
and\\ University of Warwick, Coventry, UK\end{tabular}}
\email{sfriedl@gmail.com}

\subjclass[2000]{Primary: 57M25, Secondary: 20C15}
\keywords{metabelian representation, knot group, Alexander polynomial, branched cover}

\dedicatory{This paper is dedicated to the memory of Jerry Levine.}

\date{\today}
\begin{abstract}
We give a classification of irreducible metabelian representations from a knot group
into $\SL(n,\CC)$ and $\GL(n,\CC)$.
If the homology of the $n$--fold branched cover of the knot is finite,
we show that every irreducible metabelian  $\SL(n,\CC)$
representation is conjugate to a unitary representation and that the set of conjugacy classes of
such representations is finite. In that case,
we give a formula for this number in terms of the Alexander polynomial  of the knot.
These results are the higher rank generalizations of a result of Nagasato, who recently studied
irreducible, metabelian $\SL(2,\CC)$ representations of knot groups.
Finally we deduce the existence of irreducible metabelian $\SL(n,\CC)$
representations of the knot group for any knot with nontrivial Alexander polynomial.
\end{abstract}
\maketitle

\section{Introduction and statement of results}

Given a knot $K \subset S^3$, we let
$X_K = S^3 \setminus \nu(K)$ denote its complement.
In  \cite{GL89}, Gordon and Luecke show that knots are determined by their complements
by proving
that, if $K_1$ and $K_2$ are knots  and $\varphi: X_{K_1} \to X_{K_2}$ a homeomorphism,
then $K_1$ and $K_2$ are equivalent. 
In the case of prime knots, Whitten 
had proved that the homeomorphism type of
the complement is determined by its fundamental group \cite{Wh87}.
Taken together, these two results reduce the classification of prime knots  to
that of knot groups.

Abstract groups are often better understood through their representations.
For example, the knot invariants coming from finite
representations of the knot group are an effective tool for distinguishing knots and
constructing knot tables.
More delicate information can be obtained from studying their character varieties, see \cite{CS83} and \cite{Kl91}.
For instance, independent work of Dunfield and Garoufalidis \cite{DG04}  and Boyer and Zhang \cite{BZ05}
establish the important result that the $A$-polynomial detects the unknot, and they prove this by  exhibiting
an arc of irreducible $\SL(2,\CC)$ characters on $\pi_1(X_K)$ for any nontrivial knot.

Since abelian representations of the knot group factor through $H_1(X_K)$, they provide
little information beyond that known in classical knot theory. Among the nonabelian representations, the simplest
are those that are trivial on the second commutator subgroup $\pi_1(X_K)^{(2)}$.
Such representations are called {\it metabelian}.
Various aspects of metabelian representations of knot groups have been studied by several authors,
and we refer the reader to the interesting papers \cite{Ha79, AHJ07, Je07}.

In this paper, we study metabelian $\SL(n,\CC)$ and $\GL(n,\CC)$ representations of knot groups. We begin
by recalling a result of Nagasato on irreducible metabelian $\SL(2,\CC)$ representations of
knot groups. Here and throughout this paper, for any knot $K$, we denote its
meridian and longitude by $\mu$ and $\la$, respectively.

\begin{theorem}{\cite[Proposition~1.1~and~Theorem~1.2]{Na07}} \label{thm:na1}  \label{thm:na2}
If $K\subset S^3$ is a knot,  then any irreducible metabelian  representation $\al:\pi_1(X_K)\to \SL(2,\CC)$ satisfies
$$ \tr(\al(\mu))=0\text{ and } \tr(\al(\la))=2.$$
Further, there exist only finitely many conjugacy classes of irreducible metabelian representations
of $\pi_1(X_K)$ into $\SL(2,\CC)$, and their number equals $$ \frac{|\Delta_K(-1)|-1}{2}.$$
\end{theorem}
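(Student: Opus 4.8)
The plan is to first pin down the precise shape of an irreducible metabelian representation, and then to count the resulting conjugacy classes via characters of the Alexander module. Write $G=\pi_1(X_K)$ and let $G^{(1)},G^{(2)}$ be its commutator subgroups. Since $\al$ kills $G^{(2)}$, the restriction $\al|_{G^{(1)}}$ factors through the abelian group $G^{(1)}/G^{(2)}=H_1(\tilde X_K)$, so $\al(G^{(1)})$ is an abelian subgroup of $\SL(2,\CC)$. For the longitude I would invoke the classical fact that $\la$ bounds a lifted Seifert surface in the infinite cyclic cover, hence $\la\in G^{(2)}$; as $\al(G^{(2)})=\{I\}$ this gives $\al(\la)=I$ and $\tr(\al(\la))=2$ at once. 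For the meridian I would argue that if $\al(G^{(1)})$ were central then $\al$ would have abelian image and be reducible, while if $\al(G^{(1)})$ were a nontrivial unipotent group it would fix a unique line, necessarily preserved by $\al(\mu)$ (which normalizes $G^{(1)}$), again forcing reducibility. Hence $\al(G^{(1)})$ is a noncentral diagonalizable group, which I may conjugate into the diagonal torus; its unordered pair of eigenlines is then the coordinate axes. Since $\mu$ normalizes $G^{(1)}$, the matrix $\al(\mu)$ permutes these two eigenlines, and it cannot fix both without making the whole image upper triangular. So $\al(\mu)$ is antidiagonal and $\tr(\al(\mu))=0$.

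For the count I would use the identification of the metabelian quotient $G/G^{(2)}$ with the semidirect product $\ZZ\ltimes H$, where $H=H_1(\tilde X_K)$ is the Alexander module and $\ZZ=\langle\mu\rangle$ acts by the deck transformation $t$. After the normalization above, a representation is encoded by a character $\chi\colon H\to\CC^*$ with $\al|_H(h)=\mathrm{diag}(\chi(h),\chi(h)^{-1})$ together with an antidiagonal $\al(\mu)$ of determinant $1$. Imposing the relation $\al(\mu)\,\al(h)\,\al(\mu)^{-1}=\al(t\cdot h)$ and carrying out the conjugation shows that this is equivalent to the single twisting condition $\chi(t\cdot h)=\chi(h)^{-1}$, i.e.\ $\chi$ is trivial on $(1+t)H$. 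Thus such $\chi$ are exactly the characters of $H/(1+t)H$, and I would identify this finite group with $H_1(\Sigma_2)$, the homology of the double branched cover, whose order is $|\Delta_K(-1)|$ by Fox's formula.

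It remains to sort out irreducibility and conjugacy. The off-diagonal entries of $\al(\mu)$ can be scaled away by conjugating with a diagonal matrix, so they contribute no new classes; conjugating by the Weyl element interchanges $\chi$ and $\chi^{-1}$, so $\chi$ and $\chi^{-1}$ give conjugate representations. A representation is reducible precisely when $\al(H)$ is central, that is, when $\chi^2=1$; otherwise the eigenlines of $\al(H)$ and the swapping action of $\al(\mu)$ admit no common invariant line and $\al$ is irreducible. The key arithmetic input is that $\Delta_K(-1)$ is odd, since $\Delta_K(-1)\equiv\Delta_K(1)=\pm1\pmod 2$; hence $H_1(\Sigma_2)$ has odd order and the only character with $\chi^2=1$ is the trivial one. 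Therefore, among the $|\Delta_K(-1)|$ characters of $H/(1+t)H$, exactly the $|\Delta_K(-1)|-1$ nontrivial ones yield irreducible representations, and these pair off freely under $\chi\mapsto\chi^{-1}$, giving $\tfrac{1}{2}\bigl(|\Delta_K(-1)|-1\bigr)$ conjugacy classes.

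The main obstacle I anticipate is making the correspondence between conjugacy classes and characters an honest bijection: one must verify that every irreducible metabelian representation is conjugate to one in the normalized antidiagonal form, so that the character $\chi$ is well defined, and that two normalized representations are conjugate exactly when their characters agree up to inversion. The supporting identification $H/(1+t)H\cong H_1(\Sigma_2)$ with $|H_1(\Sigma_2)|=|\Delta_K(-1)|$, together with the oddness of $\Delta_K(-1)$, is what converts the abstract count of character pairs into the stated closed formula.
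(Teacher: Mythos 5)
Your proposal is correct and follows essentially the same route as the paper's own treatment (Theorem \ref{thm:classirrsl} and Corollary \ref{cor:mainthm} specialized to $n=2$): normalize so that $\pi_1(X_K)^{(1)}$ acts diagonally through a character $\chi$ of $H/(t+1)H\cong H_1(L_2)$ and the meridian acts antidiagonally, then count nontrivial characters modulo $\chi\mapsto\chi^{-1}$, using that $|\Delta_K(-1)|$ is odd so the action is free and $\chi^2\neq 1$ exactly when $\chi$ is nontrivial. The points you flag as needing care (every irreducible representation is conjugate to the normal form, and conjugacy corresponds to agreement of $\chi$ up to the $t$-twist) are exactly what the paper's weight-space argument in Theorem \ref{thm:classirrgl} supplies, so there is no gap in substance.
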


Note that Xiao--Song Lin (\cite[Proposition~4.2]{Li01})  had  obtained the same count for the number of
conjugacy classes of irreducible metabelian $SU(2)$ representations.
(Lin attributed his result to Fox, and Eric Klassen proved the same formula
when counting binary dihedral representations, cf. \cite[Theorem 10]{Kl91}.)
In particular, Nagasato's result implies that every irreducible metabelian $\SL(2,\CC)$ representation is conjugate to a  unitary one.
This property is what one would expect from representations of finite groups. Although
the  quotient $\pi_1(X_K)/ \pi_1(X_K)^{(2)}$ is not finite, we will show that for many $n$
(for instance, whenever $n$ is prime),
every irreducible metabelian $\SL(n,\CC)$ representation of $\pi_1(X_K)$ factors through a finite group.

Unitary metabelian representations of knot groups have been
classified by the second author in \cite{Fr03, Fr04}, and here we give
the corresponding classification result for
$\SL(n,\CC)$ and $\GL(n,\CC)$.
As a consequence, we derive the following  higher rank analogue of Nagasato's result.
 In order to state it precisely, we introduce some notation.
 Given a knot $K \subset S^3$ and a positive integer $n$, we let $L_n$
 denote the $n$-fold cyclic branched
cover of $S^3$ branched along $K$.

\begin{theorem} \label{mainthm}
Suppose $K\subset S^3$ is a knot and $n$ is a positive integer.
\begin{enumerate}
\item[(i)]
Up to conjugation, any irreducible metabelian representation $\al :\pi_1(X_K)\to \SL(n,\CC)$ satisfies
$\al (\la) = I$ and $$\al(\mu)    = \begin{pmatrix}
  0 &  \dots & 0 &(-1)^{n+1} \\
  1&0 &  \dots &0 \\
  &\ddots &\ddots & \vdots \\
    0& &1 & 0
  \end{pmatrix}.$$ In particular, we have
$$ \tr(\al(\mu))=0 \text{ and } \tr(\al(\la))=n.$$
\item[(ii)] If $H_1(L_n)$ is finite, then the number of conjugacy classes of irreducible
metabelian $\SL(n,\CC)$ representations is finite. In fact the number is given by
\begin{eqnarray*}
&\frac{1}{n} \# \{ \chi:H_1(L_n) \to S^1  \mid
\text{$\chi$ does not factor through}  &\\
& \hspace{2.1in} \text{$H_1(L_n) \to H_1(L_\ell )$ for any  $\ell | n$}  \}.& 
\end{eqnarray*}
\item[(iii)] If $H_1(L_n)$ is finite, then  every
irreducible metabelian $\SL(n,\CC)$ representation of $ \pi_1(X_K)$
is conjugate to a unitary representation.
\end{enumerate}
\end{theorem}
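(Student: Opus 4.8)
The plan is to read off unitarity directly from the normal form supplied by part~(i), combined with the character-theoretic description underlying part~(ii). Recall that an irreducible metabelian representation factors through the metabelian quotient $\pi_1(X_K)/\pi_1(X_K)^{(2)}$, which is an extension of $H_1(X_K)=\ZZ$ (generated by the image of $\mu$) by the Alexander module $\pi_1(X_K)^{(1)}/\pi_1(X_K)^{(2)}$. First I would invoke the classification to conjugate $\al$ into the standard shape of part~(i): $\al(\mu)$ is the companion matrix $P$ displayed there and $\al(\la)=I$, while the commutator subgroup is carried into the diagonal torus, its image being governed by a single character $\chi$ of $H_1(L_n)$ in the manner recorded in part~(ii). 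Concretely, $\al$ is conjugate to the representation induced from $\chi$ along the index-$n$ subgroup $\Ga=\phi^{-1}(n\ZZ)$, where $\phi:\pi_1(X_K)\to\ZZ$ is the abelianization, and where $\chi$ does not factor through any $H_1(L_\ell)$ with $\ell\mid n$; this last condition is precisely irreducibility.

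The key observation is that finiteness of $H_1(L_n)$ forces $\chi$ to be unitary. Since $H_1(L_n)$ is a finite abelian group, every element has finite order, so each value $\chi(g)$ is a root of unity and hence lies in $S^1$. This is exactly why the count in part~(ii) is phrased in terms of characters $\chi:H_1(L_n)\to S^1$ rather than $\chi:H_1(L_n)\to\CC^\times$: under the finiteness hypothesis the two notions coincide.

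With $\chi$ unitary, unitarity of $\al$ is immediate. In the normal form of part~(i) the image of the commutator subgroup consists of diagonal matrices whose entries are the numbers $\chi(t^{j}g)$, all lying in $S^1$, hence unitary; and $\al(\mu)=P$ is itself unitary, since its columns are the standard basis vectors up to sign and are therefore orthonormal. As $\pi_1(X_K)$ is generated by $\mu$ together with its commutator subgroup, the whole image lies in $U(n)$, and being contained in $\SL(n,\CC)$ it in fact lies in $SU(n)$. Equivalently, $\al$ is induced from the unitary character $\chi$ along the finite-index subgroup $\Ga$, and induction of a unitary representation along a finite-index subgroup is again unitary. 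Either way, $\al$ is conjugate to a unitary representation.

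The only real work lies in the first step: one must justify, from the classification, that the commutator subgroup is carried into the diagonal torus with entries prescribed by a single character $\chi$ of $H_1(L_n)$, and that the irreducibility of $\al$ corresponds to the non-factorization condition of part~(ii). Once this normal form is in hand the argument is essentially formal, since the permutation matrix $P$ and the diagonal $S^1$-matrices are manifestly unitary. I expect no difficulty beyond bookkeeping in checking that the conjugation producing the normal form is an honest conjugation in $\GL(n,\CC)$.
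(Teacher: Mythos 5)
Your argument addresses only part (iii) of the theorem; parts (i) and (ii) are used as inputs rather than proved. For part (iii) itself, what you write is essentially the paper's own argument: by the classification (Theorem \ref{thm:classirrsl}) the representation is conjugate to $\al_\chi$ for a character $\chi$ of order $n$ on the Alexander module $H=H_1(X_K;\ZZ[t^{\pm 1}])$; such a $\chi$ factors through $H/(t^n-1)\cong H_1(L_n)$, which is finite by hypothesis, so every value of $\chi$ is a root of unity, and $\al_\chi$ is then visibly a representation into $SU(n)$ (a signed permutation matrix times diagonal matrices with entries in $S^1$). Your remark that $\al_\chi$ is induced from the unitary character $\chi$ along the index-$n$ subgroup $\phi^{-1}(n\ZZ)$ is correct and gives a slightly slicker way to see unitarity, but it is the same mechanism.

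The genuine gaps are in (i) and (ii). For (i) you assert $\al(\la)=I$ as part of the ``standard shape,'' but the classification theorem says nothing about the longitude; one needs the separate (standard) fact that $\la\in\pi_1(X_K)^{(2)}$, so that every metabelian representation kills it. For (ii) you never actually produce the count. What is missing: (a) the identification $H/(t^n-1)\cong H_1(L_n)$ compatible with the projections $H_1(L_n)\to H_1(L_\ell)$ (Lemma \ref{lem:hlk}); (b) the equivalence between ``$\chi$ has order exactly $n$'' and ``$\chi$ does not factor through $H_1(L_\ell)$ for any proper divisor $\ell\mid n$,'' which rests on the gcd lemma (Lemma \ref{lem:gcd}) --- you assert this equivalence (``this last condition is precisely irreducibility'') without justification; (c) finiteness of the set of such characters, via $|\Hom(A,S^1)|=|A|$ for finite abelian $A$; and (d) the fact that $\al_{\chi_1}$ and $\al_{\chi_2}$ are conjugate if and only if $\chi_1=t^k\chi_2$, together with the observation that $\ZZ/n$ acts \emph{freely} on the set of order-$n$ characters --- this freeness is exactly what produces the factor $\frac{1}{n}$ in the formula and is not addressed anywhere in your sketch.
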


It is a well-known result  (cf. \cite[p.~17]{Go77} or \cite[8.21]{BuZi85}) that the $n$--fold cyclic branched
cover $L_n$ has finite  first homology group
if and only if  no root of the
Alexander polynomial $\Delta_K(t)$ is an $n$-th root of unity.
In that case, the order of the homology group is determined by the Alexander polynomial by the formula
\begin{equation} \label{orderH1}
| H_1(L_n )|=\left| \prod_{j=1}^{n-1} \Delta_K(e^{2\pi ij/n})\right| .
\end{equation}
From $\Delta_K(1)=1$ and some basic algebra, it follows that $\Delta_K(z)\ne 0$
for any prime power root of unity $z$, we therefore deduce the well--known fact that
$H_1(L_n)$ is finite whenever $n$ is a prime power.
The following is now a straightforward corollary to Theorem \ref{mainthm}.
Note that when $n=2$, this recovers Nagasato's formula.

\begin{corollary} \label{cor:mainthm}
If $n=p^k$ is a prime power, then $H_1(L_n)$ is finite for any knot $K$
and the number of conjugacy classes of irreducible metabelian $\SL(n,\CC)$
representations of the knot group is  given by
$$\frac{1}{p^k} \left(\left|H_1(L_{p^k})\right|-\left|H_1(L_{p^{k-1}})\right|\right).$$
If $n$ is prime,  then $L_1 = S^3$ and equation (\ref{orderH1}) implies that this number equals
$$\frac{1}{n}\left(\left|H_1(L_{n})\right|-\left|H_1(L_{1})\right|\right) =
\frac{1}{n}\left(\left| \prod_{j=1}^{n-1}  \Delta_K(e^{2 \pi i j/n})\right|-1 \right).$$
\end{corollary}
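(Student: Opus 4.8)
The plan is to read off the corollary from Theorem~\ref{mainthm}(ii) by specializing the character count to $n=p^k$ and exploiting the fact that the divisors of a prime power form a chain under divisibility. First I would settle finiteness: for $n=p^k$ every $n$-th root of unity has order dividing $p^k$, hence is either $1$ or a prime power root of unity. By the remark preceding the corollary (together with $\Delta_K(1)=1$), the Alexander polynomial $\Delta_K$ is nonzero at all of these, so by the criterion of Gordon and Burde--Zieschang cited there, $H_1(L_{p^k})$ is finite. This is exactly the hypothesis needed to invoke Theorem~\ref{mainthm}(ii), and it establishes the first assertion of the corollary.

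Next comes the combinatorial reduction. The proper divisors of $p^k$ are $1,p,\dots,p^{k-1}$, and they form a chain with unique maximal element $p^{k-1}$. For any proper divisor $\ell$ we have $\ell \mid p^{k-1} \mid p^k$, and the projection maps are compatible, so $H_1(L_{p^k})\to H_1(L_\ell)$ factors as $H_1(L_{p^k})\to H_1(L_{p^{k-1}})\to H_1(L_\ell)$. Consequently a character $\chi:H_1(L_{p^k})\to S^1$ factors through $H_1(L_\ell)$ for \emph{some} proper divisor $\ell$ if and only if it factors through $H_1(L_{p^{k-1}})$. This is where the prime-power hypothesis does its work: because the divisors form a chain, no inclusion--exclusion over the several divisors of $n$ is required, and the set in Theorem~\ref{mainthm}(ii) becomes simply the characters of $H_1(L_{p^k})$ that do not factor through $H_1(L_{p^{k-1}})$.

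I would then carry out the count. For a finite abelian group $G$ the Pontryagin dual $\Hom(G,S^1)$ has order $|G|$, so there are exactly $|H_1(L_{p^k})|$ characters in total. The characters factoring through the quotient map $f:H_1(L_{p^k})\to H_1(L_{p^{k-1}})$ form the image of the dual map $f^{*}:\Hom(H_1(L_{p^{k-1}}),S^1)\to\Hom(H_1(L_{p^k}),S^1)$; since $f$ is surjective, $f^{*}$ is injective, so this image has order $|H_1(L_{p^{k-1}})|$. Subtracting and dividing by $p^k$ yields $\tfrac{1}{p^k}\bigl(|H_1(L_{p^k})|-|H_1(L_{p^{k-1}})|\bigr)$, which is the first displayed formula. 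For $n=p$ prime we have $k=1$, $p^{k-1}=1$, and $L_1=S^3$ with $H_1(L_1)=0$, so $|H_1(L_1)|=1$; substituting the order formula (\ref{orderH1}) for $|H_1(L_p)|$ gives the stated product expression.

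The step I expect to be the crux is the surjectivity of the projection-induced map $f:H_1(L_{p^k})\to H_1(L_{p^{k-1}})$, since it is precisely what forces the factoring characters to number exactly $|H_1(L_{p^{k-1}})|$. I would justify it through the Alexander-module description $H_1(L_m)\cong A/(t^m-1)A$, where $A$ is the Alexander module of $K$ over $\ZZ[t,t^{-1}]$: because $p^{k-1}\mid p^k$ we have $(t^{p^{k-1}}-1)\mid (t^{p^k}-1)$, so $f$ is the natural quotient $A/(t^{p^k}-1)A\twoheadrightarrow A/(t^{p^{k-1}}-1)A$ and is therefore surjective. Once this identification is in place, the remainder of the argument is elementary bookkeeping with the chain of divisors and finite-abelian-group duality.
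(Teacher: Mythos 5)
Your proposal is correct and follows essentially the same route as the paper: the paper also deduces finiteness of $H_1(L_{p^k})$ from the nonvanishing of $\Delta_K$ at prime power roots of unity, then specializes the count of Theorem~\ref{mainthm}(ii) by observing that the divisors of $p^k$ form a chain so that factoring through any proper $H_1(L_\ell)$ is equivalent to factoring through $H_1(L_{p^{k-1}})$ (using the surjectivity of the projections from Lemma~\ref{lem:hlk}), and finally counts via $|\Hom(G,S^1)|=|G|$ as in Lemma~\ref{lem:finite}. The only cosmetic difference is that the paper packages this as the base case of the M\"obius-function formula in Theorem~\ref{thm:count}.
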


If $K$ is a knot with trivial Alexander polynomial, then equation (\ref{orderH1}) shows that $L_n$ is a homology
3-sphere for each $n \geq 1,$ and Theorem \ref{mainthm} implies  there are no
irreducible metabelian $\SL(n,\CC)$ representations of $\pi_1(X_K)$ for any $n \geq 2.$
In fact, it is known (see e.g.~\cite[Theorem 1.2]{Ln02}) that $L_n$ is a homology sphere for all
$n \geq 1$ if and only if $K$ has Alexander polynomial $\Delta_K(t)=1.$
Using this, we get the following conclusion.

\begin{corollary}\label{cor:numrep}
Any knot   with nontrivial Alexander polynomial admits
infinitely many irreducible, pairwise non--conjugate metabelian representations.
\end{corollary}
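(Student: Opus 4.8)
The plan is to show that $\Delta_K \ne 1$ forces infinitely many levels $n$ at which $H_1(L_n)$ is nontrivial, and then to convert this into infinitely many pairwise non-conjugate irreducible metabelian representations---either infinitely many into a single $\SL(m,\CC)$, or representations into $\SL(n,\CC)$ for infinitely many distinct $n$, which are automatically non-conjugate since they lie in different groups. For each $d>1$ set $c_d = \left| \prod_{\zeta} \Delta_K(\zeta) \right|$, the product over primitive $d$-th roots of unity $\zeta$; this is a nonnegative integer, and grouping the factors in equation (\ref{orderH1}) according to the order of the root of unity gives $|H_1(L_n)| = \prod_{d \mid n,\, d>1} c_d$ whenever $H_1(L_n)$ is finite. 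If $c_d = 1$ for every $d>1$, then $|H_1(L_n)| = 1$ for all $n$, so every $L_n$ is a homology sphere and $\Delta_K = 1$. Hence $\Delta_K \ne 1$ produces some $d_0>1$ with $c_{d_0} \ne 1$, and then $H_1(L_n)$ is nontrivial for every multiple $n$ of $d_0$.

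I would then split into two cases according to whether $c_{d_0} = 0$ or $c_{d_0} \ge 2$. If $c_{d_0} = 0$, then $\Delta_K$ has a primitive $d_0$-th root of unity as a root, so $H_1(L_{d_0})$ is infinite. The deck transformation $t$ acts on $H_1(L_{d_0})$ with finite order $m$, where $2 \le m \le d_0$, and with the same order on the infinite character group $\Hom(H_1(L_{d_0}), S^1)$. The characters whose $\langle t\rangle$-orbit has size strictly less than $m$ are those trivial on some nonzero subgroup $\operatorname{im}(t^k-1)$ with $k<m$, hence lie in a finite union of proper closed subgroups; so all but a lower-dimensional set of characters have full orbit size $m$, and there are infinitely many such orbits. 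By the classification behind Theorem \ref{mainthm}(i), each orbit of size $m$ determines an irreducible metabelian $\SL(m,\CC)$ representation, and distinct orbits give non-conjugate representations. This yields infinitely many pairwise non-conjugate irreducible metabelian representations into the single group $\SL(m,\CC)$.

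If instead $c_{d_0} \ge 2$ while $c_d \ge 1$ for all $d$ (so no root of $\Delta_K$ is a root of unity and $H_1(L_n)$ is finite for every $n$), I would argue that $|H_1(L_n)| \to \infty$. Since $\Delta_K$ has no cyclotomic factor, Kronecker's theorem gives Mahler measure $M(\Delta_K) > 1$, and the classical asymptotic $\tfrac1n \log |H_1(L_n)| \to \log M(\Delta_K)$ for branched covers (valid precisely because $\Delta_K$ has no root-of-unity roots) then forces $|H_1(L_n)|$ to grow without bound. By Theorem \ref{mainthm}(ii) the number of irreducible metabelian $\SL(n,\CC)$ representations is $\tfrac1n$ times the number of characters of $H_1(L_n)$ not factoring through $H_1(L_\ell)$ for any proper divisor $\ell$ of $n$; the characters that do factor number at most $\sum_{\ell \mid n,\, \ell < n} |H_1(L_\ell)|$, which is negligible compared with $|H_1(L_n)|$ once $n$ is large. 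Hence the count is positive for infinitely many $n$, producing irreducible metabelian representations into $\SL(n,\CC)$ for infinitely many distinct $n$.

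The main obstacle is the finite case: having $|H_1(L_n)|$ large does not by itself guarantee an irreducible representation, since the relevant characters are only those not descending to a proper sublevel $L_\ell$. The trefoil, whose finite homology groups stay bounded in $\{1,3,4\}$, shows that some genuinely infinite source is unavoidable---there it is supplied by the root-of-unity case. Making the estimate in the second case rigorous requires the exponential lower bound on $|H_1(L_n)|$ to dominate $\sum_{\ell < n} |H_1(L_\ell)|$ over proper divisors, which is exactly where the Mahler-measure asymptotic enters; and in the first case one must check that the character-to-representation correspondence, together with the determinant normalization needed to land in $\SL(m,\CC)$, survives in the setting of infinite $H_1(L_{d_0})$.
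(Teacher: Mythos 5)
Your overall case division (some zero of $\Delta_K$ a root of unity versus none) is exactly the paper's, which treats the two cases in Theorems \ref{thm:numrepa} and \ref{thm:numrepb}. Your second case is correct and is a genuine alternative to the paper's argument: where the paper uses a minimality trick (the sets $S_N$) to dispose of the divisors $\ell$ of $n$ with $N<\ell<n$, you use the Mahler--measure asymptotic $\frac1n\log|H_1(L_n)|\to\log M(\Delta_K)>1$ to make the inclusion--exclusion bound $\sum_{\ell\mid n,\ \ell<n}|H_1(L_\ell)|<|H_1(L_n)|$ hold for all large $n$. Note that you need this asymptotic in both directions --- an upper bound $|H_1(L_\ell)|\le M^{\ell(1+\ep)}$ for the proper divisors $\ell\le n/2$ as well as the lower bound for $|H_1(L_n)|$ --- since the crude bound $|H_1(L_\ell)|\le \bigl(\max_{|z|=1}|\Delta_K(z)|\bigr)^{\ell}$ is not good enough when the Mahler measure is close to $1$. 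Both arguments ultimately rest on the same growth results of Riley, Gonz\'alez-Acu\~na--Short and Silver--Williams that the paper cites.

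The gap is in your first case, at the step ``the characters whose orbit has size less than $m$ lie in a finite union of proper closed subgroups; so all but a lower-dimensional set have full orbit size $m$.'' A proper closed subgroup of $\Hom(H_1(L_{d_0}),S^1)\cong (S^1)^r\times F$ need not have lower dimension: the annihilator of $(t^k-1)H_1(L_{d_0})$ has positive codimension only when $(t^k-1)H_1(L_{d_0})$ is infinite, i.e.\ when $t^k$ acts nontrivially on $H_1(L_{d_0})\otimes\QQ$. With your choice of $m$ (the order of $t$ on all of $H_1(L_{d_0})$, torsion included) it can happen that $t^{m/p}$ is trivial on the free part but nontrivial on the torsion; the corresponding ``bad'' subgroup then has finite index and full dimension, and the dimension count collapses. (The conclusion can still be rescued --- such fixed subgroups cannot cover the infinite compact dual, and the complement is open, hence infinite --- but that requires a separate argument you do not give.) The paper sidesteps this by taking $m$ to be the minimal integer such that every root-of-unity zero of $\Delta_K$ is an $m$-th root of unity: Lemma \ref{lem:b1} then gives $b_1(L_\ell)<b_1(L_m)$ for every proper divisor $\ell$ of $m$, so every bad subvariety genuinely has positive codimension in $R(H_1(L_m))$ --- this is condition (d) of Lemma \ref{lem2}, which for this $m$ holds vacuously --- and $X^*_m$ even acquires positive-dimensional components.
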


Corollary \ref{cor:numrep} is a consequence of Theorems \ref{thm:numrepa} and \ref{thm:numrepb}, which give more precise information regarding conjugacy classes  of irreducible metabelian representations for knots
with nontrivial Alexander polynomial.
Our proofs make use of the extensive work on homology groups of $n$-fold branched covers of knots
that began with Gordon's paper \cite{Go72} and was continued in \cite{Ri90,GS91,SW02}, and our existence results
of representations are strengthenings of previous ones (c.f.~ \cite[Corollary 11]{Kl91}).

\bigskip

It is a straightforward exercise to extend all of these results to knots in homology spheres.
In particular, we see that given a knot in a homology sphere with nontrivial Alexander polynomial,
the knot group
admits an irreducible, metabelian $\SL(n,\CC)$ representation for some $n\geq 2.$
Of course, any metabelian representation of the knot group must send the longitude to the identity.
It is interesting to compare this statement to the result \cite[Theorem~1.7]{Fn93} of Frohman,
which shows that if $K$ is a fibered knot of genus $g$
in a rational homology sphere, then there exists an irreducible $SU(n)$ representation
of the knot group sending the longitude to $e^{2 \pi i /n} I$ for some $2\leq n \leq g+1$.


\section{Metabelian representations into $\GL(n,\CC)$} \label{sectionmetab}

\subsection{Preliminaries}

A group $G$ is called metabelian if $G^{(2)}=\{e\}$, where $G^{(n)}$ is the $n$--th term of the
derived series of $G$ which is inductively defined by $G^{(0)}=G$ and $G^{(i+1)}=[G^{(i)},G^{(i)}]$.
We say a representation $\varphi:G\to \GL(n,\CC)$ is metabelian if it factors through $G/G^{(2)}$.

A representation $\rho:G\to \Aut(\CC^n)\cong \GL(n,\CC)$ is called \emph{reducible}
if there exists a proper subspace $V\subset \CC^n$ such that $\rho$ restricts to a representation
$\rho:G\to \Aut(V)$.
Otherwise $\rho$ is called \emph{irreducible} or \emph{simple}.
If $\al$ is the direct sum of simple representations, then $\al$ is called \emph{semisimple}.

Given a representation $\al:G\to \Aut(V)$ we say that $\chi:G\to \CC^*=\CC \sm \{0\}$
is a \emph{weight} if there exists a nontrivial $v\in V$ such that  $\al(g)(v)=\chi(g)(v)$ for all $g\in G$.
For any weight $\chi$, we set
$$ V_\chi=\left\{ v\in V \mid \al(g)(v)=\chi(g)(v) \text{ for all $g\in G$}\right\}.$$
Clearly $V_\chi$ is a nontrivial subspace of $V$. We refer to $V_\chi$ as the \emph{weight space} of $\chi$.
If $\chi_1$ and $\chi_2$ are distinct weights, then it is not hard to show that $V_{\chi_1}\cap V_{\chi_2}=\{0\}$.
Any abelian group has at least one weight, this follows easily from
the fact that every irreducible representation of an abelian group is one--dimensional  (cf. \cite[p.~36]{FS92}).

\subsection{Metabelian quotients of knot groups} \label{sectionclassrep}

Let $K\subset S^3$ be a knot.
In the following we denote by $\widetilde{X_K}$
the infinite cyclic cover of $X_K$ corresponding to the abelianization $\pi_1(X_K)\to \ZZ$.
Therefore $\pi_1(\widetilde{X_K})=\pi_1(X_K)^{(1)}$ and
$$ H_1(X_K;\ZZ[t^{\pm 1}])=H_1(\widetilde{X_K}) \cong \pi_1(X_K)^{(1)}/\pi_1(X_K)^{(2)}.$$
The $\ZZ[t^{\pm 1}]$--module structure is
given on the right hand side
by $t^n\cdot g:=\mu^{-n}g\mu^n$, where $\mu$ is the meridian
of $K$.

Given groups $G$ and $H$ together with a homomorphism $\varphi:G \to \Aut(H)$, the semidirect product
$G \ltimes H$ is
 the group whose underlying set is just $G \times H$ and where the group structure is given by
$$ (g_1,h_1) \cdot (g_2,h_2):=(g_1 \cdot g_2,\varphi(g_2)(h_1) \cdot h_2). $$
For a knot $K$, we set $\pi:=\pi_1(X_K)$ and consider the short exact sequence
$$ 1\to \pi^{(1)}/\pi^{(2)}\to \pi/\pi^{(2)}\to\pi/\pi^{(1)}\to 1. $$
Since $\pi/\pi^{(1)}=H_1(X_K)\cong \ZZ$, this sequence splits and we get isomorphisms
$$ \begin{array}{rcccl} \pi/\pi^{(2)}
&\cong & \pi/\pi^{(1)} \ltimes \pi^{(1)}/\pi^{(2)}
&\cong &\ZZ \ltimes \pi^{(1)}/\pi^{(2)}   \cong \ZZ \ltimes H_1(X_K;\ZZ[t^{\pm 1}]) \\
    g &\mapsto &(g,\mu^{-\ep(g)}g) &\mapsto &(\ep(g),\mu^{-\ep(g)}g), \end{array}  $$
where $n \in \ZZ$ acts by conjugation by $\mu^n$ on $\pi^{(1)}/\pi^{(2)}$ and by multiplication
by $t^n$ on $H_1(X_K; \ZZ[t^{\pm1}])$. Thus,
metabelian representations of $\pi_1(X_K)$ can be viewed as
representations of $\ZZ \ltimes H_1(X_K;\ZZ[t^{\pm 1}])$ and vice--versa.

\subsection{Irreducible $\GL(n, \CC)$ representations of certain semidirect products}
In this section we present a classification of the irreducible $\GL(n,\CC)$ representations
of metabelian groups of the form $\ZZ\ltimes H$, where $H$ is a finitely generated
$\ZZ[t^{\pm 1}]$--module and where $n\in \ZZ$ acts on $H$ by multiplication by $t^n$.

We begin with the prototypical example of a $\GL(n,\CC)$ representation of $\ZZ\ltimes H$.
Fix $n \in \NN$.
Let  $\chi:H  \to \CC^*$ be a character  that factors through $H/(t^n-1)$ and let $z\in \CC^*$.
Then it is easy to verify that
 $$ \begin{array}{rcl} \al=\al_{(z,\chi)} :\ZZ \ltimes H& \to &\GL(n,\CC) \\
    (j,h) &\mapsto &
 \begin{pmatrix}
 0& \dots &0&z \\
 1&0& \dots &0 \\
\vdots &\ddots &  & \vdots\\
    0&\dots &1&0 \end{pmatrix}^j
\begin{pmatrix}
\chi(h) &0& \dots&0 \\
 0&\chi(th) & \dots& 0\\
\vdots&&\ddots & \vdots\\ 0&0&\dots &\chi(t^{n-1}h) \end{pmatrix} \end{array} $$
defines a representation. Note that $\al(n,0)$ is a diagonal matrix with each diagonal entry  equal to $z$.
Also note that $\al$ factors through $\ZZ\ltimes H/(t^n-1)$.

Our first goal is to  determine which representations $\al_{(z,\chi)}$ are irreducible.
We say that a character $\chi:H\to \CC^*$ has \emph{order $k$} if
$$ k=\min\left\{ \ell\in \NN \mid \chi \text{ factors through } H/(t^\ell-1)\right\}.$$
Given a character  $\chi:H\to \CC^*$, let $t^i\chi$ be the character defined by $(t^i\chi)(h)=\chi(t^ih)$.
Obviously, if $\chi$ has order $n$, then $t^n\chi =\chi.$ Conversely, the next lemma shows that the order
of any character $\chi:H\to \CC^*$ which factors through $H/(t^n-1)$ divides $n$.

\begin{lemma} \label{lem:gcd}
If $\chi:H\to \CC^*$ is a character which factors through both $H/(t^n-1)$ and $H/(t^\ell-1)$,
then $\chi$ also factors through $H/(t^{\gcd(n,\ell)}-1)$.
\end{lemma}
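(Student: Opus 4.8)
The plan is to recast the hypothesis in terms of the dot action $t^i\chi$ introduced above, and then invoke the elementary fact that a subgroup of $\ZZ$ containing two integers contains their $\gcd$. First I would unwind the meaning of ``factors through $H/(t^k-1)$''. Since $H$ is a $\ZZ[t^{\pm1}]$--module, this quotient is $H/(t^k-1)H$, and a character $\chi:H\to\CC^*$ descends to it precisely when it is trivial on the submodule $(t^k-1)H$, i.e.\ when $(t^k-1)H\subseteq\ker\chi$. Writing $\CC^*$ multiplicatively, this says $\chi((t^k-1)h)=1$, equivalently $\chi(t^kh)=\chi(h)$ for every $h\in H$; in the paper's notation this is exactly the identity $t^k\chi=\chi$. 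Thus the lemma is equivalent to the implication: if $t^n\chi=\chi$ and $t^\ell\chi=\chi$, then $t^{\gcd(n,\ell)}\chi=\chi$.

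Next I would observe that $i\mapsto t^i\chi$ defines an action of the \emph{group} $\ZZ$ on the set of characters of $H$: because $t$ is a unit in $\ZZ[t^{\pm1}]$, one checks directly that $t^i(t^j\chi)=t^{i+j}\chi$ for all $i,j\in\ZZ$, including negative exponents. Consequently the stabilizer $S=\{k\in\ZZ \mid t^k\chi=\chi\}$ is a subgroup of $\ZZ$, hence closed under addition and negation. The hypotheses say $n,\ell\in S$, so $S$ contains the subgroup they generate, namely $\gcd(n,\ell)\ZZ$. Therefore $\gcd(n,\ell)\in S$, which is precisely the desired conclusion.

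If one prefers an explicit computation in place of the subgroup observation, I would instead write $\gcd(n,\ell)=an+b\ell$ by B\'ezout and argue directly: iterating $t^n\chi=\chi$ gives $t^{an}\chi=\chi$ and similarly $t^{b\ell}\chi=\chi$, where one again uses the invertibility of $t$ to permit $a$ or $b$ negative; hence $t^{\gcd(n,\ell)}\chi=t^{an}(t^{b\ell}\chi)=\chi$. Either route is short, and I do not anticipate a genuine obstacle. The only point deserving a word of care is that the relevant set of exponents is a true subgroup of $\ZZ$ rather than merely a submonoid: this is exactly where invertibility of $t$ enters (equivalently, the fact that $\chi$ takes values in $\CC^*$, so its values may be inverted), and it is what allows one to pass from $n$ and $\ell$ down to $\gcd(n,\ell)$ rather than only up to their common multiples.
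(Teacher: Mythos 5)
Your proof is correct, and it takes a genuinely different route from the paper's. The paper works inside the polynomial ring: it invokes the identity $\gcd(t^n-1,t^\ell-1)=t^{\gcd(n,\ell)}-1$ in $\QQ[t]$, produces a B\'ezout relation $(t^n-1)p(t)+(t^\ell-1)q(t)=t^{\gcd(n,\ell)}-1$, checks that $p,q$ may be taken in $\ZZ[t]$ (using that the leading coefficients of $t^n-1$ and $t^\ell-1$ are units), and then evaluates $\chi$ on $\bigl(t^{\gcd(n,\ell)}-1\bigr)h$. You instead move the B\'ezout argument from polynomials down to the exponents: after translating ``factors through $H/(t^k-1)$'' into the fixed-point condition $t^k\chi=\chi$, the set of such $k$ is the stabilizer of $\chi$ under the $\ZZ$--action $i\mapsto t^i\chi$, hence a subgroup of $\ZZ$, hence contains $\gcd(n,\ell)$ once it contains $n$ and $\ell$. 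Your version is the more economical one --- it needs neither the polynomial gcd identity nor the integrality adjustment of $p$ and $q$, and it makes explicit the one place where invertibility of $t$ (equivalently, that $\chi$ takes values in a group) is actually used, namely in passing from a submonoid to a subgroup of $\ZZ$. What the paper's formulation buys in exchange is that it stays entirely in the language of $\ZZ[t^{\pm1}]$--module quotients, which matches how the lemma is applied later (characters factoring through $H/(t^k-1)\cong H_1(L_k)$); but the two arguments are logically interchangeable, and yours would serve equally well there.
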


\begin{proof}
An easy exercise shows that $\gcd(t^n-1,t^\ell-1)=t^{\gcd(n,\ell)}-1$ in the polynomial ring $\QQ[t]$, and using the Euclidean algorithm, we find polynomials $p(t),q(t)\in \QQ[t]$ with $(t^n-1)p(t)+(t^\ell-1)q(t)=t^{\gcd(n,\ell)}-1$. Since the leading coefficients of
$t^n-1$ and $t^\ell-1$ are units in $\ZZ$, we can arrange that $p(t),q(t)$ lie in $\ZZ[t]$.

For any $h\in H$, we have
\begin{eqnarray*}
\chi\left((t^{\gcd(n,\ell)}-1)h\right)
&=&\chi\big[((t^n-1)p(t)+(t^\ell-1)q(t))h\big]\\
&=&\chi\big[(t^n-1)p(t)h\big]+\chi\big[(t^\ell-1)q(t)h\big]=0.
\end{eqnarray*}
\end{proof}

We can now determine which representations $\al_{(z,\chi)}$ are irreducible.

\begin{lemma} \label{lem:propmetabrep}
Suppose  $\chi:H  \to \CC^*$ is a character that factors through $H/(t^n-1)$ and $z\in \CC^*$.
Then $\al_{(z,\chi)}:\ZZ\ltimes H\to \GL(n,\CC)$ is irreducible if and only if the character $\chi$ has order $n$.
\end{lemma}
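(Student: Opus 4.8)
The plan is to study $\al=\al_{(z,\chi)}$ through the simultaneous eigenspace (weight) decomposition of its restriction to the abelian subgroup $H=\{(0,h)\}$. Set $P:=\al(1,0)$ and $D(h):=\al(0,h)$. Then $D(h)$ is diagonal with $D(h)e_i=\chi(t^{i-1}h)\,e_i$, while $P$ is the cyclic shift $Pe_i=e_{i+1}$ for $i<n$ and $Pe_n=z\,e_1$, so that $P^n=zI$ and $P$ permutes the coordinate lines $\CC e_1,\dots,\CC e_n$ as a single $n$-cycle. In particular each $e_i$ spans a weight line for $H$ with weight $t^{i-1}\chi$, where $(t^i\chi)(h)=\chi(t^ih)$. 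The first observation to record is that the $n$ weights $\chi,t\chi,\dots,t^{n-1}\chi$ are pairwise distinct exactly when $\chi$ has order $n$: since $t^i\chi=t^j\chi$ means $\chi$ factors through $H/(t^{i-j}-1)$, Lemma~\ref{lem:gcd} shows this happens iff the order of $\chi$ divides $i-j$, and as the order always divides $n$, all $n$ weights are distinct iff that order equals $n$.

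For the ``if'' direction I would assume $\chi$ has order $n$ and take a nonzero $\al$-invariant subspace $W\subseteq\CC^n$. Because $W$ is invariant under the commuting family of diagonalizable operators $\{D(h)\}$, whose joint eigenspaces are the lines $\CC e_i$ (the weights $t^{i-1}\chi$ being distinct), $W$ must be a sum of these weight lines, i.e.\ a coordinate subspace $W=\bigoplus_{i\in S}\CC e_i$ for some nonempty $S\subseteq\{1,\dots,n\}$. Invariance under $P$ then forces $S$ to be stable under the $n$-cycle $i\mapsto i+1\pmod n$; transitivity of this cycle gives $S=\{1,\dots,n\}$ and hence $W=\CC^n$, proving irreducibility.

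For the ``only if'' direction I would prove the contrapositive: if $\chi$ has order $k$ with $k<n$ (so $k\mid n$ by the above), then $t^k\chi=\chi$, which makes the diagonal entries of every $D(h)$ periodic of period $k$ and therefore makes $P^k$ commute with all $D(h)$; since $P^k$ also commutes with $P$, it is an intertwiner of $\al$. As $0<k<n$, the shift $P^k$ sends $e_1$ to $e_{1+k}$, which is not a scalar multiple of $e_1$, so $P^k$ is not scalar. Schur's Lemma (valid over the algebraically closed field $\CC$) then rules out irreducibility; equivalently, any eigenspace of $P^k$ is a proper nonzero invariant subspace.

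The step I expect to require the most care is the ``if'' direction --- specifically, the assertion that an $H$-invariant subspace is automatically a sum of weight spaces, which rests on the standard fact that the spectral projections onto the distinct joint eigenspaces of the commuting family $\{D(h)\}$ are polynomials in those operators and hence preserve $W$. The reducibility direction, by contrast, is essentially immediate once one notices that $P^k$ is a nonscalar self-intertwiner.
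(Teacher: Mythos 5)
Your proof is correct, and your ``if'' direction is essentially the paper's own argument: restrict to $H$, observe that the weights $\chi, t\chi, \dots, t^{n-1}\chi$ are pairwise distinct when $\chi$ has order $n$ (you justify this more carefully via Lemma~\ref{lem:gcd} than the paper does), conclude that any $H$-invariant subspace is a coordinate subspace, and use the transitive action of the shift $P=\al(1,0)$ on the lines $\CC e_i$. Where you genuinely diverge is the ``only if'' direction: the paper declares it a ``straightforward exercise'' and defers to the proof of Theorem~\ref{thm:classirrgl}, whereas you give a self-contained argument by exhibiting $P^k$ (for $k$ the order of $\chi$, $k<n$) as a nonscalar element of the commutant and invoking Schur's Lemma, or equivalently taking an eigenspace of the diagonalizable, nonscalar operator $P^k$ as an explicit proper invariant subspace. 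This is a clean and arguably preferable route, since it produces the invariant subspace directly from $\al_{(z,\chi)}$ itself rather than routing through the general classification; the paper's deferral buys brevity at the cost of a forward reference. Both are sound.
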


\begin{proof}
Throughout this proof, let $\al = \al_{(z,\chi)}$ and  suppose $\chi$ has order $n$.
Denote by $\ga:H\to \GL(n,\CC)$ the restriction of $\al$ to $H=0\times H\subset \ZZ\ltimes H$,
and let $\{e_1,\dots,e_n\}$ be  the standard basis of $\CC^n$.
Then $\ga$ restricts to a representation on $\CC e_i$
that is given by $t^{i-1}\chi$
for $i=1,\ldots, n.$
Since $\chi$ has order $n$ it follows that the characters  $\chi,t\chi,\dots,t^{n-1}\chi:H\to \CC^*$ are pairwise distinct.
Clearly any $H$--invariant subspace of $\CC^n$ must be of the form
$\oplus_{i=1}^r \CC e_{n_i}$ for some $\{n_1,\dots,n_r\}\subset \{1,\dots,n\}$.
Let $T=\al(1,0)$ and notice that $T(e_i)=e _{i+1}$ (with indices taken modulo $n$)
It follows that $\al(\ZZ,0)$ acts transitively on the subspaces $\CC e_i$.
In particular we see that the only proper subrepresentation of $\al$ is the zero space.
This concludes the proof that $\al$ is irreducible.

Now suppose $\chi$ has order $\ell < n$.
It is a straightforward exercise to show that $\al$ is reducible.
We will skip this part of the proof since it is also an immediate consequence of the proof of Theorem \ref{thm:classirrgl}.
\end{proof}

The next result presents a  classification of all irreducible $\GL(n,\CC)$ representations
of semidirect products of the form $\ZZ \ltimes H$.

\begin{theorem} \label{thm:classirrgl}
\begin{enumerate}
\item[(i)] Any irreducible representation $\al:\ZZ \ltimes H \to \GL(n,\CC)$ is  conjugate to
$\al_{(z,\chi)}$ for a character  $\chi:H \to \CC^*$ of order $n$ and some $z\in \CC^*$.
\item[(ii)] If  $\chi_1,\chi_2:H  \to \CC^*$ are characters of order $n$ and  $z_1,z_2 \in \CC^*$,
then $ \al_{(z_1,\chi_1)}$ is conjugate to $ \al_{(z_2,\chi_2)}$ if and only if $z_1=z_2$ and
$\chi_1=t^k\chi_2$ for some $k$.
\end{enumerate}
\end{theorem}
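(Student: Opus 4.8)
The plan is to prove both parts by exploiting the structure of the normal abelian subgroup $H$ and the cyclic action of $\ZZ$. For part (i), I would start with an arbitrary irreducible representation $\al:\ZZ\ltimes H\to \GL(n,\CC)$ and first analyze its restriction $\ga=\al|_H$. Since $H$ is abelian and we are over $\CC$, the representation $\ga$ decomposes into weight spaces: there is at least one weight $\chi:H\to\CC^*$ (as noted in the Preliminaries, every abelian group has a weight), and $\CC^n=\bigoplus_\chi V_\chi$ with the distinct weight spaces intersecting trivially. Let $T=\al(1,0)$ denote the image of the generator of $\ZZ$. The key compatibility is that conjugation by $T$ permutes the weight spaces: from the semidirect product relation $(1,0)(0,h)(1,0)^{-1}=(0,th)$, one checks that $T$ carries $V_\chi$ isomorphically onto $V_{t^{-1}\chi}$ (the exact exponent to be pinned down from the multiplication convention). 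Thus $T$ acts on the finite set of weights as a cyclic permutation generated by $\chi\mapsto t^{\pm1}\chi$.

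The heart of part (i) is then to show that irreducibility forces a single $T$-orbit of weights, each space one-dimensional, of length exactly $n$. First I would argue that $\bigoplus_{k}V_{t^k\chi}$, the span of one $T$-orbit of weight spaces, is invariant under both $H$ (since it is a sum of weight spaces) and $T$ (since $T$ permutes them cyclically within the orbit), hence is $\al$-invariant; irreducibility makes this the whole of $\CC^n$, so there is exactly one orbit. Next, since all the $V_{t^k\chi}$ are isomorphic via powers of $T$, they share a common dimension $d$, and the orbit has some length $m$ with $md=n$. To force $d=1$ and $m=n$, I would choose any line $\CC v\subset V_\chi$ and consider the subspace spanned by $v,Tv,\dots,T^{m-1}v$; this is $H$-invariant because each $T^kv$ lies in the weight space $V_{t^{-k}\chi}$ and $H$ acts by the scalar $(t^{-k}\chi)(h)$ there, and it is obviously $T$-invariant, hence $\al$-invariant. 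Irreducibility then forces it to be all of $\CC^n$, giving $d=1$ and $m=n$, so $\chi$ has order exactly $n$ (the orbit length equals the order). Choosing the basis $\{v,Tv,\dots,T^{n-1}v\}$ puts $\ga$ in diagonal form $\operatorname{diag}(\chi,t^{-1}\chi,\dots)$ and makes $T$ a cyclic shift whose only free parameter is the scalar $z=T^n\in\CC^*$ (since $T^n$ commutes with all of $H$ and acts on the irreducible representation, Schur's lemma forces it to be a scalar). After possibly relabeling $\chi$ by a power of $t$, this is exactly $\al_{(z,\chi)}$.

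For part (ii), the direction that $z_1=z_2$ and $\chi_1=t^k\chi_2$ implies conjugacy is the easy computational check: the substitution $\chi\mapsto t^k\chi$ corresponds precisely to a cyclic relabeling of the standard basis, which is realized by conjugation by a power of the shift matrix, and one verifies this conjugation fixes the block-shift part and its top-right entry $z$. The converse is the more structural direction. Suppose $B\,\al_{(z_1,\chi_1)}\,B^{-1}=\al_{(z_2,\chi_2)}$. Comparing the restrictions to $H$, the multisets of weights must agree, so $\{\chi_1,t\chi_1,\dots,t^{n-1}\chi_1\}=\{\chi_2,t\chi_2,\dots,t^{n-1}\chi_2\}$ as sets; since these are $T$-orbits this already gives $\chi_1=t^k\chi_2$ for some $k$. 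Finally, the invariant $z$ is recovered intrinsically as $\det$-free data from $\al(n,0)=z\cdot I$, which is manifestly conjugation-invariant, forcing $z_1=z_2$. I expect the main obstacle to be the bookkeeping in part (i): carefully justifying that the weight spaces are genuinely permuted by $T$ with the correct index shift under this particular semidirect-product convention, and rigorously ruling out higher-dimensional weight spaces $d>1$ before the cyclic-vector argument closes the case. The order-equals-orbit-length claim also deserves care, and here Lemma~\ref{lem:gcd} is the clean input: the order of $\chi$ is the least $\ell$ with $t^\ell\chi=\chi$, which by that lemma is well behaved with respect to divisibility and matches the orbit length exactly.
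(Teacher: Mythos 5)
Your overall strategy is the same as the paper's: restrict $\al$ to $H$, use the weight-space decomposition, observe that $T=\al(1,0)$ permutes the weight spaces along the $t$-orbit of a weight $\chi$, and close with a cyclic-vector argument identifying $\al$ with $\al_{(z,\chi)}$; part (ii) is likewise handled by comparing the weights (which gives $\chi_1=t^k\chi_2$), extracting $z$ from a conjugation invariant, and conjugating by a power of the shift matrix for the converse. However, there is one genuine gap in part (i), at exactly the point you flag as delicate. You take an \emph{arbitrary} line $\CC v\subset V_\chi$ and assert that $W=\operatorname{span}(v,Tv,\dots,T^{m-1}v)$ is ``obviously $T$-invariant.'' It is not. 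The vector $T^mv$ lies again in $V_\chi$ (since $t^{\pm m}\chi=\chi$, whichever sign convention you fix), and because the weight spaces are independent we have $W\cap V_\chi=\CC v$; hence $TW\subset W$ forces $T^mv\in\CC v$, i.e.\ $v$ must be an eigenvector of the automorphism $T^m|_{V_\chi}$. If $\dim V_\chi>1$ and $v$ is chosen badly, $W$ is simply not invariant, and your argument that irreducibility forces $d=1$ and $m=n$ stalls --- it produces neither an invariant subspace nor a contradiction. Since ruling out $d>1$ is precisely the burden of this step, the argument as written does not close.

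The repair is one line and is exactly the paper's move: $T^m$ restricts to an automorphism of $V_\chi$, so over $\CC$ it has an eigenvector $v\in V_\chi$ with $T^mv=zv$; with that choice $W$ is genuinely $\al$-invariant, irreducibility gives $W=\CC^n$, and one reads off $\dim V_\chi=1$, $m=n$, and the matrix form $\al_{(z,\chi)}$ with $z$ the eigenvalue (no appeal to Schur's lemma is then needed). The remaining points of your proposal --- finiteness of the orbit, the identification of the order of $\chi$ with the orbit length via Lemma~\ref{lem:gcd}, and both directions of (ii), where your use of $\al(n,0)=zI$ in place of the paper's determinant computation is a harmless variant --- are correct and match the paper's argument.
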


\begin{proof} We first prove (ii).
For convenience, we set $ \al_1=\al_{(z_1,\chi_1)}$ and $ \al_2=\al_{(z_2,\chi_2)}$.
First assume that
$ \al_1$ and $ \al_2$ are conjugate.
Note that
$$ z_1(-1)^{n+1}=\det(\al_1(1,0))=\det(\al_2(1,0))=z_2(-1)^{n+1}.$$
Now denote by $\ga_i$ the restriction of $\al_i$ to  $H$.
Clearly the weights of $\ga_i$ are  given by
$$ \chi_i,t\chi_i,\dots,t^{n-1}\chi_i.$$
Also recall that $t^{n+k}\chi_i=t^k\chi_i$.
Since the sets of weights of $\ga_1$ and $\ga_2$ have to agree it now follows
immediately that $\chi_1=t^k\chi_2$ for some $k$.

Now assume $z_1=z_2$ and $\chi_1=t^k\chi_2$ for some integer $k$.
Then it is easy to check that, for any $(j,h) \in \ZZ\ltimes H,$ we have
$$   \begin{pmatrix}
0& \dots &0&z_1 \\ 1&\dots &0&0 \\ \vdots &\ddots &&\vdots \\  0&\dots &1&0 \end{pmatrix} ^{-k}
\al_1(j,h)
 \begin{pmatrix}
 0& \dots &0&z_1 \\ 1&\dots &0&0 \\ \vdots &\ddots &&\vdots \\  0&\dots &1&0 \end{pmatrix}^k
     = \al_2(j,h).
     $$

We now prove (i).
Let  $\al:\ZZ \ltimes H \to \GL(n,\CC)$ be an irreducible representation.
We denote by $\ga$ the resulting representation $H\to 0\times H\to \ZZ\ltimes H\xrightarrow{\al} \GL(n,\CC)$.

Since $H$ is abelian there exists at least one weight $\chi:H\to \CC^*$.
Let $\ell$ be the order of $\chi$, we write $\ell=\infty$ if $\chi$ does not factor through $H/(t^\ell-1)$ for any $\ell$.
For $i=0,\dots,\ell-1$ we consider the weight spaces
$$ V_i:= \left\{v\in \CC^n \mid \ga(h)(v)=\chi(t^ih)(v).\right\} $$
Since $\chi,t\chi,\dots,t^{\ell-1}\chi$ are different we obtain that
$V_0\oplus V_1\oplus \cdots \oplus V_{\ell-1}$ embeds in $\CC^n$.

Now recall that the group structure of $\ZZ \ltimes H$ is given by
$$ (j_1,h_1)(j_2,h_2)=(j_1+j_2,t^{j_2}h_1+h_2) $$
In particular   for any $h \in H$, we have
$$ (j,0)(0,t^jh)=(j,t^jh)=(0,h)(j,0).$$
Therefore, setting $T=\al(1,0)$, we see that
$$ T^j\al(0,t^jh)=\al(j,0)\al(0,t^jh)
 =\al(j,t^jh)=\al(0,h)\al(j,0)=\al(0,h)T^j.$$
It follows that $T^j \ga(t^jh)=\ga(h)T^j$, and
for $v\in V_{0}$, we see that
$$ \ga(h)T^jv=T^j\ga(t^j h)v=T^j \chi(t^jh)v=\chi(t^jh)T^jv.$$
Hence $T^j$ induces a map $V_0\to V_{j}$ (where we take indices modulo $\ell$)
which is an isomorphism with inverse $T^{-j}$.
This shows $\dim(V_j)=\dim(V_0)\ge 1$ for $j=1,\dots,\ell-1$, which together with
the fact that $V_0\oplus V_1\oplus \cdots \oplus V_{\ell-1}$ embeds in $\CC^n$,
implies that $\ell\leq n$, in particular $\ell$ is finite.

Note that $T^\ell$ induces an isomorphism of $V_0$. Let $v$ be an eigenvector of $T^\ell:V_0 \to V_0$
and let $W$ be the subspace of $\CC^n$
spanned by $\{ v, Tv, \ldots, T^{\ell-1} v\}.$ Clearly $\al$ restricts to a representation of $W$, and
so irreducibility of $\al$ implies   $\ell=\dim(W)=n$. It is straightforward to see that,
in terms of the basis $v,Tv,\dots,T^{n-1}v$, the representation $\al$ is given by $\al_{(z,\chi)}$,
where $z$ is the eigenvalue corresponding to the eigenvector $v$ of $T^\ell$.
\end{proof}

\section{Metabelian representations into $\SL(n,\CC)$}

\subsection{Metabelian $\SL(n,\CC$--representations}

In this section we apply the previous results to give a classification of irreducible
metabelian $\SL(n,\CC)$ representations of knot groups.
We begin with an elementary observation.

\begin{lemma}\label{lem:conj}
If the representations $\al,\be:G\to \SL(n,\CC)$ are conjugate over $\GL(n,\CC)$, then
they are also conjugate over $\SL(n,\CC)$.
\end{lemma}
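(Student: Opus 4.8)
The plan is to exploit the fact that two conjugate matrices in $\SL(n,\CC)$ are related by a change of basis, and that any such change of basis can be rescaled so that its determinant is $1$. Suppose $\al,\be:G\to \SL(n,\CC)$ are conjugate over $\GL(n,\CC)$, so there exists $P\in \GL(n,\CC)$ with $P\,\al(g)\,P^{-1}=\be(g)$ for all $g\in G$. Since $P$ is invertible, $\det(P)=d$ for some $d\in \CC^*$. First I would choose an $n$-th root $c\in \CC^*$ of $d$, which exists because $\CC$ is algebraically closed (equivalently, because $\CC^*$ is divisible), and set $Q:=c^{-1}P$.

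The key observation is that scaling the conjugating matrix does not change the conjugation action. Indeed, $Q\,\al(g)\,Q^{-1}=(c^{-1}P)\,\al(g)\,(c^{-1}P)^{-1}=c^{-1}c\,P\,\al(g)\,P^{-1}=\be(g)$, so $Q$ still conjugates $\al$ into $\be$. Meanwhile $\det(Q)=\det(c^{-1}P)=c^{-n}\det(P)=c^{-n}d=1$, using that $c^n=d$. Hence $Q\in \SL(n,\CC)$, and $\al$ and $\be$ are conjugate over $\SL(n,\CC)$ as desired.

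I expect no serious obstacle here: the only point requiring any care is the existence of the $n$-th root of $\det(P)$, which is immediate over $\CC$ but would fail over a field that is not closed under taking $n$-th roots. The argument is purely formal and does not use the metabelian or knot-theoretic structure at all; it applies to any group $G$ and indeed shows that the $\GL(n,\CC)$- and $\SL(n,\CC)$-conjugacy relations coincide on representations landing in $\SL(n,\CC)$. This is exactly the tool needed so that the classification up to $\GL(n,\CC)$-conjugacy in Theorem \ref{thm:classirrgl} can be transported to a classification up to $\SL(n,\CC)$-conjugacy.
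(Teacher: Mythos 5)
Your proof is correct and is essentially identical to the paper's: both rescale the conjugating matrix $P$ by an $n$-th root of $\det(P)$ to land in $\SL(n,\CC)$, noting that scalar rescaling does not affect the conjugation action. No further comment is needed.
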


\begin{proof}
Assume that there exists a matrix $P\in \GL(n,\CC)$ such that $P\al(g)P^{-1}=\be(g)$ for all $g\in G$. Then let
$z$ be an $n$--th root of $\det(P)$. Clearly $\det(z^{-1}P)=1$ and
$(z^{-1}P)\al(g)(z^{-1}P)^{-1}=\be(g)$ for all $g\in G$.
\end{proof}

As before, we suppose $H$ is a finitely generated $\ZZ[t^{\pm 1}]$--module and we consider $\SL(n,\CC)$ representations
of the semidirect product $\ZZ\ltimes H$, where $n\in \ZZ$ acts on $H$ by multiplication by $t^n$.
Throughout this section, we make the additional assumption on $H$
that multiplication by $t-1$ is an isomorphism. Notice first that this holds for the principle application we have in mind.
Indeed, the long exact sequence in homology
$$ \cdots \rightarrow H_{i+1}(X_K;\ZZ)\to H_i(\widetilde{X_K};\ZZ)\xrightarrow{t-1} H_i(\widetilde{X_K};\ZZ)\to H_i(X_K;\ZZ)\rightarrow \cdots $$
 shows that $H=H_1(X_K;\ZZ[t^{\pm 1}])$ has this property.

\begin{lemma} \label{lem:sl}
Let $H$ be a $\ZZ[t^{\pm 1}]$--module such that multiplication by $t-1$ is an isomorphism,
$\chi:H  \to \CC^*$ a character that factors through $H/(t^n-1)$, and $z\in \CC^*$. Set $\al=\al_{(z,\ga)}$.
Then for any $(j,h)\in \ZZ\ltimes H$ we have
$$ \det(\al(j,h))=(-1)^{(n+1)j}z^j.$$
\end{lemma}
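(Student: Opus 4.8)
The plan is to compute the determinant directly from the explicit matrix formula defining $\al_{(z,\chi)}$, exploiting the factorization of $\al(j,h)$ into a permutation-type matrix raised to the $j$-th power times a diagonal matrix. First I would observe that the determinant is multiplicative, so it suffices to compute the determinants of the two factors separately and combine them. Writing $C$ for the cyclic matrix
$$ C = \begin{pmatrix}
0 & \dots & 0 & z \\
1 & 0 & \dots & 0 \\
\vdots & \ddots & & \vdots \\
0 & \dots & 1 & 0
\end{pmatrix},$$
the key computation is $\det(C) = (-1)^{n+1}z$: expanding along the first row (or recognizing $C$ as $z$ times a cyclic permutation matrix of an $n$-cycle) gives the single nonzero term $z$ times the cofactor, which is the determinant of an $(n-1)\times(n-1)$ lower-triangular matrix of ones along the subdiagonal, contributing the sign $(-1)^{n+1}$. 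Hence $\det(C^j) = \big((-1)^{n+1}z\big)^j = (-1)^{(n+1)j} z^j$.

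Next I would handle the diagonal factor. The second matrix in the definition of $\al_{(z,\chi)}$ is diagonal with entries $\chi(h), \chi(th), \dots, \chi(t^{n-1}h)$, so its determinant is $\prod_{i=0}^{n-1} \chi(t^i h) = \chi\!\left(\sum_{i=0}^{n-1} t^i h\right)$, using that $\chi$ is a homomorphism into $\CC^*$. The decisive point is that this product equals $1$. This is exactly where the standing hypothesis that multiplication by $t-1$ is an isomorphism on $H$ enters: since $(t-1)\sum_{i=0}^{n-1} t^i h = (t^n - 1)h$ and $\chi$ factors through $H/(t^n-1)$, we get $\chi\big((t^n-1)h\big) = 1$. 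Because $t-1$ is an automorphism of $H$, every element of $H$ is uniquely $(t-1)h'$ for some $h'$, and more to the point we need that the specific element $\sum_{i=0}^{n-1} t^i h$ lies in the kernel of $\chi$; I would argue that $(t-1)$ acting injectively forces $\sum_{i=0}^{n-1} t^i h$ itself into $\ker\chi$ once we know $(t-1)$ applied to it lands in $\ker\chi$, provided $\ker\chi$ is $(t-1)$-invariant, which it is since $\chi$ factors through a $t$-module quotient.

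The main obstacle I anticipate is precisely this last inference: passing from $\chi\big((t-1)\,w\big)=1$, where $w = \sum_{i=0}^{n-1} t^i h$, to $\chi(w)=1$. Multiplicativity of the determinant and the cyclic determinant are routine; the subtlety is that $t-1$ being an isomorphism on $H$ does not by itself say $\chi(w)=1$ — one must use that $\chi$ is $t$-equivariant in the sense relevant here. The cleanest route is to note $\chi(t\cdot) $ and $\chi$ agree on the relevant submodule because $\chi$ factors through $H/(t^n-1)$; I would make precise that $\chi((t-1)w)=\chi(tw)\chi(w)^{-1}$ and that $tw = w$ as elements of $H/(t^n-1)$ since $tw - w = (t^n-1)h$, which forces $\chi(tw)=\chi(w)$ and hence $\chi((t-1)w)=1$ trivially, while separately $\det$ of the diagonal factor being $\chi(w)$ must be shown equal to $1$ by a distinct argument — most likely by invoking that the whole representation $\al$, or a determinant twist of it, descends to $\ZZ \ltimes H/(t^n-1)$ together with the $t-1$ isomorphism property to pin down $\chi(w)=1$. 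Assembling the two factors then yields $\det(\al(j,h)) = (-1)^{(n+1)j}z^j \cdot 1 = (-1)^{(n+1)j}z^j$, completing the proof.
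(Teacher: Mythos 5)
Your computation of the two determinant factors is fine and matches the paper's: $\det(C^j)=(-1)^{(n+1)j}z^j$ and the diagonal factor contributes $\chi\bigl(\sum_{i=0}^{n-1}t^ih\bigr)$. The gap is exactly at the step you flag yourself: showing $\chi(w)=1$ for $w=\sum_{i=0}^{n-1}t^ih$. Your argument applies $t-1$ to $w$, obtaining $(t-1)w=(t^n-1)h\in\ker\chi$, and then tries to conclude $w\in\ker\chi$ from injectivity of $t-1$. This inference is invalid: from $f$ injective and $f(w)\in N$ one gets $w\in N$ only if $f^{-1}(N)\subseteq N$, i.e.\ only if $N\subseteq f(N)$, which is not given. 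Moreover the auxiliary claim that $\ker\chi$ is $(t-1)$-invariant is false in general: $\chi$ is merely a group homomorphism $H\to\CC^*$ satisfying $(t^n-1)H\subseteq\ker\chi$; its kernel need not be a $\ZZ[t^{\pm1}]$-submodule (indeed $t\chi\neq\chi$ is precisely the generic situation exploited elsewhere in the paper). Your fallback — that $\chi(w)=1$ "must be shown by a distinct argument" involving a descent of the representation — is not an argument; the statement $\chi(tw)=\chi(w)$, which you do establish, is vacuous here since it is equivalent to $\chi((t-1)w)=1$, which was already known.

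The hypothesis must be used in the opposite direction, via \emph{surjectivity} of $t-1$ applied to $h$ rather than injectivity applied to $w$: choose $h'\in H$ with $(t-1)h'=h$ (possible since $t-1$ is an isomorphism). Then
$$\sum_{i=0}^{n-1}t^ih=\sum_{i=0}^{n-1}t^i(t-1)h'=(t^n-1)h',$$
which lies in $(t^n-1)H\subseteq\ker\chi$ because $\chi$ factors through $H/(t^n-1)$. Hence $\chi(w)=1$ and the determinant is $(-1)^{(n+1)j}z^j$ as claimed. This is the paper's proof; the difference from your attempt is small in appearance but essential, since the telescoping identity must produce an element of $(t^n-1)H$, not merely an element whose image under $t-1$ lies in $(t^n-1)H$.
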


\begin{proof}
 Let $\chi$ be a character that factors through $H/(t^n-1)$ and let $(j,h)\in \ZZ\ltimes H$.
 It is straightforward to see that
$$
\det(\al(j,h))=(-1)^{(n+1)j}z^{j} \prod_{i=0}^{n-1} \chi(t^ih)
=(-1)^{(n+1)j}z^{j} \chi\left(\sum_{i=0}^{n-1}t^ih\right).$$
Since multiplication by $t-1$ is an isomorphism on $H$, we have $h' \in H$ with $(t-1)h'=h$.
Thus $$\chi\left(\sum_{i=0}^{n-1}t^ih\right) = \chi\left(\sum_{i=0}^{n-1}t^i(t-1)h'\right) =
\chi\left((t^n-1)h' \right) =1,$$
since $\chi$ factors through $H/(t^n-1)$.
\end{proof}

If $H$ is a $\ZZ[t^{\pm 1}]$--module such that multiplication by $t-1$ is an isomorphism, and
$\chi:H\to \CC^*$ is a character of order $n$, then it follows from
Lemma  \ref{lem:sl} that, for $(j, h) \in \ZZ\ltimes H,$ setting
$$  \al_\chi (j,h) =
 \begin{pmatrix} 0& \dots &0&(-1)^{n+1} \\ 1&\dots &0&0 \\
\vdots &\ddots &&\vdots \\
     0&\dots &1 &0 \end{pmatrix}^j \begin{pmatrix} \chi(h) &0&\dots &0 \\
 0&\chi(th) &\dots &0 \\
\vdots &&\ddots &\vdots \\ 0&0&\dots &\chi(t^{n-1}h) \end{pmatrix}
$$
defines an  $\SL(n,\CC)$ representation.

\begin{theorem} \label{thm:classirrsl}
Let $H$ be a $\ZZ[t^{\pm 1}]$--module such that multiplication by $t-1$ is an isomorphism.
Then the following hold:
\begin{enumerate}
\item[(i)] If $\chi:H\to \CC^*$ is a character of order $n$, then $\al_{\chi}$
defines an irreducible $\SL(n,\CC)$ representation.
\item[(ii)] Given two characters $\chi_1,\chi_2:H\to \CC^*$ of order $n$, the representations
$\al_{\chi_1}$ and $\al_{\chi_2}$ are conjugate if and only if $\chi_1=t^k\chi_2$ for some $k$.
\item[(iii)] For any irreducible representation  $\al:\ZZ\ltimes H\to \SL(n,\CC)$
 there exists a character $\chi:H\to \CC^*$ of order $n$ such that
$\al$ is conjugate to $\al_{\chi}$.
\end{enumerate}
\end{theorem}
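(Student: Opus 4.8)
The plan is to deduce all three statements from the $\GL(n,\CC)$ classification in Theorem~\ref{thm:classirrgl}, using Lemma~\ref{lem:sl} to pin down the scalar $z$ and Lemma~\ref{lem:conj} to pass between $\GL$-- and $\SL$--conjugacy. The basic observation is that $\al_\chi$ is nothing but $\al_{(z,\chi)}$ for the special value $z=(-1)^{n+1}$: with this choice Lemma~\ref{lem:sl} gives $\det(\al_{(z,\chi)}(j,h)) = (-1)^{(n+1)j}z^j = \big((-1)^{n+1}\big)^{2j}=1$, so that $\al_\chi$ indeed takes values in $\SL(n,\CC)$.

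For (i), I would simply note that irreducibility of $\al_\chi$ as an $\SL(n,\CC)$ representation is the same as irreducibility of $\al_{(z,\chi)}$ as a $\GL(n,\CC)$ representation, since both amount to the absence of a proper nontrivial invariant subspace of $\CC^n$; the latter holds by Lemma~\ref{lem:propmetabrep} because $\chi$ has order $n$. For (ii), since $\al_{\chi_1}$ and $\al_{\chi_2}$ are by construction $\al_{(z,\chi_1)}$ and $\al_{(z,\chi_2)}$ with the \emph{same} scalar $z=(-1)^{n+1}$, the criterion of Theorem~\ref{thm:classirrgl}(ii) applies verbatim: they are $\GL$--conjugate if and only if their scalars agree (automatic here) and $\chi_1=t^k\chi_2$ for some $k$. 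Lemma~\ref{lem:conj} then shows that $\GL$--conjugacy and $\SL$--conjugacy coincide, giving the stated equivalence.

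The substance of the theorem lies in (iii), and here is where I expect the only real obstacle. Given an irreducible $\al:\ZZ\ltimes H\to\SL(n,\CC)$, Theorem~\ref{thm:classirrgl}(i) produces a character $\chi$ of order $n$ and a scalar $z\in\CC^*$ with $\al$ conjugate over $\GL(n,\CC)$ to $\al_{(z,\chi)}$. A priori $z$ is arbitrary, and $\al_{(z,\chi)}$ need not lie in $\SL(n,\CC)$, so I cannot immediately invoke Lemma~\ref{lem:conj}. The key step is to force $z=(-1)^{n+1}$: conjugation preserves determinants, so $\det(\al_{(z,\chi)}(j,h))=1$ for all $(j,h)$, and evaluating Lemma~\ref{lem:sl} at $(j,h)=(1,0)$ yields $(-1)^{n+1}z=1$, hence $z=(-1)^{n+1}$. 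With this value $\al_{(z,\chi)}=\al_\chi$ is an $\SL(n,\CC)$ representation, and since $\al$ and $\al_\chi$ are then two $\GL$--conjugate $\SL(n,\CC)$ representations, Lemma~\ref{lem:conj} upgrades the $\GL$--conjugacy to $\SL$--conjugacy, completing the proof.
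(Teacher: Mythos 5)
Your proposal is correct and follows exactly the route the paper intends: the paper's entire proof is the single sentence that the theorem ``is an immediate consequence of Theorem~\ref{thm:classirrgl} and Lemmas~\ref{lem:propmetabrep}, \ref{lem:conj}, and \ref{lem:sl},'' and your argument supplies precisely the omitted details (identifying $\al_\chi=\al_{(z,\chi)}$ with $z=(-1)^{n+1}$, pinning down $z$ in part (iii) via the determinant at $(1,0)$, and upgrading $\GL$--conjugacy to $\SL$--conjugacy via Lemma~\ref{lem:conj}).
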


The theorem is an immediate consequence of Theorem \ref{thm:classirrgl} and Lemmas \ref{lem:propmetabrep}, \ref{lem:conj}, and \ref{lem:sl}.

\subsection{Proofs of Theorem  \ref{mainthm} and Corollary \ref{cor:mainthm}}

We need two more basic lemmas before we are in a position to prove
Theorem  \ref{mainthm} and Corollary \ref{cor:mainthm}.
The following  is well--known and easy to prove.

\begin{lemma} \label{lem:finite}
Let $A$ be a finite abelian group, then
$$ |\Hom(A,\CC^*)|=|\Hom(A,S^1)|=|\Hom(A,\QQ/\ZZ)|=|A|.$$
\end{lemma}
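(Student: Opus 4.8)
The plan is to establish two things: first that the three $\Hom$-sets coincide (more precisely, are in canonical bijection), and second that the common cardinality equals $|A|$. The first reduces the problem to a single computation, and the second is a routine application of the structure theorem for finite abelian groups.

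For the coincidence of the three sets, the key observation is that finiteness of $A$ forces every homomorphism into $\CC^*$ to have image inside the group $\mu_\infty$ of roots of unity. Indeed, if $a\in A$ has order $m$ and $\chi:A\to\CC^*$ is a homomorphism, then $\chi(a)^m=\chi(a^m)=\chi(e)=1$, so $\chi(a)$ is an $m$-th root of unity; in particular $|\chi(a)|=1$, hence $\chi(a)\in S^1$. Thus every element of $\Hom(A,\CC^*)$ already takes values in $\mu_\infty\subset S^1$, giving $\Hom(A,\CC^*)=\Hom(A,S^1)=\Hom(A,\mu_\infty)$ as literal equalities of sets of functions. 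The isomorphism $\mu_\infty\xrightarrow{\sim}\QQ/\ZZ$ sending $e^{2\pi i q}$ to $q+\ZZ$ then induces a bijection $\Hom(A,\mu_\infty)\cong\Hom(A,\QQ/\ZZ)$. This settles the first three equalities and leaves only $|\Hom(A,\QQ/\ZZ)|=|A|$.

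For the final count, I would invoke the structure theorem to write $A\cong\bigoplus_{i=1}^r \ZZ/n_i\ZZ$ and use that $\Hom(-,\QQ/\ZZ)$ carries finite direct sums to direct products, so that $\Hom(A,\QQ/\ZZ)\cong\prod_{i=1}^r\Hom(\ZZ/n_i\ZZ,\QQ/\ZZ)$. It then suffices to show $|\Hom(\ZZ/n\ZZ,\QQ/\ZZ)|=n$: a homomorphism out of $\ZZ/n\ZZ$ is determined by the image of a generator, which may be any element of $\QQ/\ZZ$ killed by $n$, and these are exactly the $n$ elements of the cyclic subgroup $\tfrac1n\ZZ/\ZZ$. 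Multiplying over $i$ gives $|\Hom(A,\QQ/\ZZ)|=\prod_i n_i=|A|$, completing the proof.

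There is no serious obstacle here; the only point requiring care is the first step, where one must note that finiteness of $A$ — rather than any topological hypothesis — is what confines the image to roots of unity and thereby identifies the three target groups for counting purposes. As an alternative to the structure theorem for the last count, one could argue by induction on $|A|$ using a short exact sequence together with the fact that $\QQ/\ZZ$ is divisible, hence injective, so that $\Hom(-,\QQ/\ZZ)$ is exact; I expect the structure-theorem argument to be the cleaner of the two.
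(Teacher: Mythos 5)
Your proof is correct and complete. The paper offers no proof of this lemma at all, dismissing it as ``well--known and easy to prove,'' so there is no argument to compare against; your two-step reasoning --- first using finiteness of $A$ to confine the image of any character to the roots of unity, which identifies $\Hom(A,\CC^*)$ with $\Hom(A,S^1)$ and $\Hom(A,\QQ/\ZZ)$, and then counting $|\Hom(\ZZ/n\ZZ,\QQ/\ZZ)|=n$ via the structure theorem --- is the standard argument and fills the gap the authors left to the reader.
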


Let $K$ be a knot and $H=H_1(\widetilde{X_K};\ZZ)=H_1(X_K;\ZZ[t^{\pm 1}])$.
Denote by $W_n$ the $n$--fold cover of $X_K$, which we can view as a subset of $L_n$.
Then the projection from $\widetilde{X_K}$ to $W_n$ induces a map
$$ \pi_n:H_1(\widetilde{X_K};\ZZ)  \to H_1(W_n)\to H_1(L_n).$$
The main properties of this map are stated in the following well--known lemma (cf. e.g. \cite{Fr03}).

\begin{lemma} \label{lem:hlk}
For any $n$ the map $\pi_n$ factors through $H/(t^n-1)$.
Given  $\ell |n$ we have a commutative diagram
$$ \xymatrix{ H/(t^n-1) \ar[d]\ar[r]^-\cong & H_1(L_n)\ar[d] \\
H/(t^\ell-1) \ar[r]^-\cong & H_1(L_\ell),}$$
where the horizontal maps are isomorphisms and the vertical maps are surjections.
\end{lemma}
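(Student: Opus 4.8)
The plan is to prove Lemma~\ref{lem:hlk} by combining standard covering-space theory with the algebraic characterization of the homology of cyclic branched covers.

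First I would establish the factorization through $H/(t^n-1)$. The cover $\widetilde{X_K} \to W_n$ is the quotient by the action of the subgroup $n\ZZ \subset \ZZ$ of deck transformations, so $H_1(W_n)$ is the coinvariants of the $\ZZ = \langle t \rangle$--action restricted to $n\ZZ$; concretely, a deck transformation by $t^n$ becomes homotopically trivial in $W_n$, so the composite $\pi_n$ kills the image of $t^n - 1$. This gives that $\pi_n$ factors as $H_1(\widetilde{X_K};\ZZ) \to H/(t^n-1) \to H_1(W_n) \to H_1(L_n)$.

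Next I would identify $H/(t^n-1)$ with $H_1(L_n)$. The branched cover $L_n$ is obtained from $W_n$ by filling in a solid torus along the branch locus. Since $H = H_1(X_K;\ZZ[t^{\pm 1}])$ has the property that multiplication by $t-1$ is an isomorphism (established in the excerpt via the long exact sequence), the meridian class that gets killed under the filling corresponds precisely to the relation that collapses the difference between $H_1(W_n)$ and $H_1(L_n)$. The cleanest route is to cite the standard identification $H_1(L_n;\ZZ) \cong H/(t^n-1)$ coming from the interpretation of $H$ as an Alexander module together with the Fox calculus / Mayer--Vietoris computation; this is exactly the content referenced in \cite{Fr03} and underlies formula (\ref{orderH1}). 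I would state this as the horizontal isomorphism, noting that the hypothesis on $t-1$ guarantees the meridian contributes no extra homology.

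For the commutative diagram with $\ell \mid n$, I would observe that the left vertical map is the natural quotient $H/(t^n-1) \twoheadrightarrow H/(t^\ell-1)$ induced by the ring quotient, which is surjective because $t^\ell - 1$ divides $t^n - 1$ when $\ell \mid n$; the right vertical map $H_1(L_n) \to H_1(L_\ell)$ is the covering-induced map arising from the intermediate cover $L_n \to L_\ell$. Commutativity then follows by naturality of the identifications, since both vertical maps are induced by the same underlying covering of spaces and the horizontal isomorphisms are natural with respect to such coverings. The main obstacle I expect is pinning down the horizontal isomorphism $H/(t^n-1) \cong H_1(L_n)$ with enough care that its naturality under $\ell \mid n$ is transparent, rather than merely quoting it as a black box; once the identification is set up functorially, surjectivity of the right vertical map and the commutativity are formal consequences of the corresponding statements on the algebraic side, which are immediate from the divisibility $t^\ell - 1 \mid t^n - 1$.
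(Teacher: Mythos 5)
The paper gives no proof of this lemma: it is stated as well known and referred to \cite{Fr03}, so there is no argument of the authors to compare yours against step by step. Your outline is the standard argument and is essentially correct. The factorization through $H/(t^n-1)$ holds because the projection $p:\widetilde{X_K}\to W_n$ satisfies $p\circ t^n=p$ for the deck transformation $t^n$, hence $p_*\bigl((t^n-1)x\bigr)=0$ for every class $x$. The identification $H/(t^n-1)\cong H_1(L_n)$ comes from the Wang sequence of the infinite cyclic cover $\widetilde{X_K}\to W_n$ combined with Mayer--Vietoris for the filling $L_n=W_n\cup (D^2\times S^1)$. The diagram commutes by naturality of these constructions with respect to the intermediate covers $W_n\to W_\ell$ and $L_n\to L_\ell$, and surjectivity of the left vertical map is immediate from $(t^n-1)\subset(t^\ell-1)$ when $\ell\mid n$, which forces surjectivity of the right vertical map as well.

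One point you should correct: the hypothesis that $t-1$ acts invertibly on $H$ is not what makes the branched filling harmless. The Wang sequence reads $0\to H/(t^n-1)\to H_1(W_n)\to H_0(\widetilde{X_K})\cong\ZZ\to 0$ (the connecting map $t^n-1$ acts as zero on $H_0$), and the lift of $\mu^n$ — the curve killed by the solid torus — maps to a generator of the $\ZZ$ on the right; killing it therefore leaves exactly $H/(t^n-1)$, with no appeal to $t-1$ being an isomorphism. That hypothesis is used elsewhere in the paper (e.g.\ in Lemma \ref{lem:sl}), not here. With that attribution fixed, and with the Wang/Mayer--Vietoris computation actually carried out rather than quoted as a black box, your sketch is a complete proof of the statement the paper leaves to the references.
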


Using Theorem \ref{thm:classirrsl}  we can now give a proof of Theorem  \ref{mainthm}.

\begin{proof}[Proof of Theorem  \ref{mainthm}]
Let $K\subset S^3$ and $n\in \NN$ and let  $\al:\pi_1(X_K)\to \SL(n,\CC)$ be an irreducible metabelian representation.
 It is well--known that the longitude $\la$ lies in $\pi_1(X_K)^{(2)}$, hence $\al(\la)=I$ and $\tr(\al(\la))=n$. This, together with Theorem \ref{thm:classirrsl} (iii) completes the proof of part (i).

 We now turn to the proof of (ii).
In the following we write  $H=H_1(\widetilde{X_K};\ZZ)$. Recall that multiplication by $t-1$ is an isomorphism on $H$. By
Theorem   \ref{thm:classirrsl}, the number of conjugacy classes of irreducible metabelian $\SL(n,\CC)$ representations
 of $\pi_1(S^3\sm K)$  is given by
$$ N=\# \left\{\chi:H \to \CC^* \mid \chi \text{ of order $n$}\right\}/\sim,$$
where $\chi_1\sim  \chi_2$ if and only if there $\chi_1=t^k\chi_2$ for some $k$.

Any character $\chi:H\to \CC^*$ of order $n$ factors through $H/(t^n-1)$,
which is by Lemma \ref{lem:hlk} isomorphic to $H_1(L_n)$.
Now assume that $H_1(L_n)$ is finite.
It follows immediately from Lemma \ref{lem:finite} that $N$ is finite.
Also note that  the group $\ZZ/n= \langle t \mid t^n=1\rangle$
 acts freely on the set of characters of order $n$,
 hence
$$ N=\frac{1}{n} \# \left\{\chi:H \to S^1 \mid \chi \text{ of order $n$}\right\}.$$
By applying Lemmas \ref{lem:gcd} and \ref{lem:hlk}  we see that
$$ N=\frac{1}{n} \# \left\{\chi:H_1(L_n) \to S^1 \mid \chi \text{ does not factor through $H_1(L_\ell)$ for any $\ell | n$}\right\},$$
as  claimed.

Finally we prove (iii).
Suppose $\al:\ZZ\ltimes H\to \SL(n,\CC)$ is an irreducible representation.
Then by Theorem \ref{thm:classirrsl} the representation $\al$ is conjugate to $\al_{\chi}$ for some character
$\chi:H\to \CC^*$ of order $n$. Note that  $\chi$ factors through $H/(t^n-1)\cong H_1(L_n)$ which is finite by hypothesis.
In particular $\chi(h)$  has finite order for each $h \in H$, i.e. $\chi$ is a unitary character $\chi:H\to S^1\subset \CC^*$.
It is now clear from the definition that $\al_{\chi}$ is a unitary representation.
\end{proof}

\begin{remark}
This proof shows that every irreducible representation $\al:\ZZ\ltimes H \to \SL(n,\CC)$
factors through $\ZZ/n \ltimes H/(t^n-1)$, which is a finite group whenever $H_1(L_n)$ is finite.
\end{remark}

Finally, Corollary \ref{cor:mainthm} is an immediate consequence of the following more general result.
\begin{theorem} \label{thm:count}
Given a knot $K$ and $n \in \NN$ with $H_1(L_n)$ finite,
the number of conjugacy classes of irreducible metabelian $\SL(n,\CC)$ representations of the knot group is  given by
$$\frac{1}{n} \sum_{k | n}  \mu(k)  \left|
H_1\left(L_{n/k)}\right)\right|,$$
where $\mu$ is the M\"{o}bius function.
\end{theorem}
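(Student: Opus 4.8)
The plan is to reduce Theorem \ref{thm:count} to the already-established count in Theorem \ref{mainthm}(ii) by a Möbius inversion argument. From part (ii) of Theorem \ref{mainthm}, the number $N$ of conjugacy classes equals
$$
N = \frac{1}{n}\,\#\left\{\chi:H_1(L_n)\to S^1 \;\middle|\; \chi \text{ does not factor through } H_1(L_\ell) \text{ for any proper } \ell\,|\,n\right\}.
$$
So the task is purely combinatorial: express the number of characters of $H_1(L_n)$ of \emph{exact} order $n$ (i.e.\ those not factoring through any $H_1(L_\ell)$ with $\ell\,|\,n$, $\ell<n$) in terms of the total character counts $|\Hom(H_1(L_d),S^1)|$ for divisors $d\,|\,n$.

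First I would set up the relevant counting function. For each $d\,|\,n$, write $f(d)$ for the number of characters $\chi:H_1(L_n)\to S^1$ whose order (in the sense defined before Lemma \ref{lem:gcd}) is exactly $d$; these are precisely the characters that factor through $H_1(L_d)$ but through no $H_1(L_e)$ for proper divisors $e\,|\,d$. By Lemma \ref{lem:hlk}, a character factors through $H/(t^d-1)\cong H_1(L_d)$ exactly when it comes from $H_1(L_d)$, and Lemma \ref{lem:gcd} guarantees these orders behave multiplicatively with respect to gcd, so every character of $H_1(L_n)$ has a well-defined order dividing $n$. Consequently the characters partition according to their exact order, giving the relation
$$
\bigl|\Hom(H_1(L_d),S^1)\bigr| = \sum_{e\,|\,d} f(e)
$$
for every $d\,|\,n$, since by Lemma \ref{lem:finite} the left side counts \emph{all} characters of $H_1(L_d)$ and these are exactly the characters of order dividing $d$.

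Next I would apply Möbius inversion to this divisor-sum identity. Since $H_1(L_n)$ is finite by hypothesis, $H_1(L_d)$ is a quotient of $H/(t^d-1)$ and hence finite for each $d\,|\,n$ (this also follows from Lemma \ref{lem:hlk}), so $|\Hom(H_1(L_d),S^1)| = |H_1(L_d)|$ by Lemma \ref{lem:finite}. Inverting yields
$$
f(n) = \sum_{k\,|\,n} \mu(k)\,\bigl|H_1(L_{n/k})\bigr|.
$$
The quantity $f(n)$ is exactly the number of characters of $H_1(L_n)$ of exact order $n$, which is the cardinality appearing in Theorem \ref{mainthm}(ii). Dividing by $n$ then gives the stated formula
$$
N = \frac{1}{n}\sum_{k\,|\,n}\mu(k)\,\bigl|H_1(L_{n/k})\bigr|.
$$

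The step I expect to require the most care is the bookkeeping in the identity $|H_1(L_d)| = \sum_{e\,|\,d} f(e)$: I must verify that the ``does not factor through $H_1(L_\ell)$ for any $\ell\,|\,n$'' condition in Theorem \ref{mainthm}(ii) coincides with having exact order $n$, and that characters factoring through $H_1(L_d)$ are counted without overlap by their true order. This is where Lemma \ref{lem:gcd} is essential, as it ensures the order is the minimal $\ell$ and that divisibility of orders is consistent, so that each character is counted exactly once in the partition by exact order. Everything else is the standard Möbius inversion over the divisor lattice of $n$, which is routine once the counting identity is correctly established.
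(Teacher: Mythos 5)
Your proof is correct, but it takes a genuinely different route from the paper's. The paper proceeds by explicit inclusion--exclusion: it first handles $n=p^k$, then $n=p^kq^\ell$, then general $n=p_1^{k_1}\cdots p_r^{k_r}$, in each case reducing the condition ``factors through $H_1(L_m)$ for some $m\,|\,n$'' to ``factors through $H_1(L_{n/p_i})$ for some $i$'' and using Lemma~\ref{lem:gcd} to identify the intersections of these conditions with ``factors through $H_1(L_{n/(p_{i_1}\cdots p_{i_s})})$''; the M\"obius function then emerges from the signs $(-1)^s$ in the inclusion--exclusion sum. You instead partition all characters of $H_1(L_n)$ by their exact order, observe via Lemmas~\ref{lem:gcd}, \ref{lem:hlk} and \ref{lem:finite} that $|H_1(L_d)|=\sum_{e\,|\,d}f(e)$ for every $d\,|\,n$, and apply M\"obius inversion once. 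The two arguments are formally equivalent (M\"obius inversion over the divisor lattice \emph{is} inclusion--exclusion over the prime divisors), and both lean on Lemma~\ref{lem:gcd} at the crucial point --- you to guarantee each character has a well-defined order dividing $d$ whenever it factors through $H_1(L_d)$, the paper to control the pairwise and higher intersections. Your version is more uniform and avoids the casework entirely; the paper's version makes the combinatorial structure visible and yields the prime-power case (hence Corollary~\ref{cor:mainthm}) along the way. The one point you rightly flag as delicate --- that ``does not factor through $H_1(L_\ell)$ for any proper $\ell\,|\,n$'' coincides with ``exact order $n$'' and that the partition by exact order is well defined --- is handled correctly by your appeal to Lemma~\ref{lem:gcd} together with the surjectivity of $H_1(L_n)\to H_1(L_d)$ from Lemma~\ref{lem:hlk}, which ensures the character count for each divisor is exactly $|H_1(L_d)|$.
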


Recall that given $n$ with prime decomposition $n=p_1^{n_1}\cdot \dots \cdot p_s^{n_s}$ with $n_i\geq 1$ and distinct primes $p_1,\dots,p_s$ the M\"obius function is defined as
$$ \mu(n) =\left\{ \begin{array}{ll} 0 &\text{ if $n_i\geq 2$ for some $i$},\\
(-1)^s &\text{ otherwise}.\end{array} \right. $$

\begin{proof}
Throughout the proof, we make repeated use of the general fact that, for any $m | n$, the projection $H_1(L_n) \to H_1(L_m)$ is surjective.

First consider the case $n=p^k$ is a prime power. Then $|H_1(L_{p^k})|$ is automatically finite,  and Theorem \ref{mainthm} implies  that
the number of conjugacy classes of irreducible metabelian representations is finite and is given by
$$ \frac{1}{p^k}\# \left\{\chi:H_1(L_{p^k}) \to S^1 \mid \chi \text{ does not factor through $H_1(L_\ell)$   for any $\ell |p^k$} \right\}.$$
If $\ell | p^k$, then $\ell = p^j$ for some $j<k$. It follows that
the projection $H_1(L_{p^{k-1}}) \to H_1(L_{p^j})$ is surjective, thus any character
$\chi:H_1(L_{p^k}) \to S^1$ which factors through $H_1(L_\ell)$
must also factor through  $H_1(L_{p^{k-1}}).$
Lemma \ref{lem:finite}  and  equation (\ref{orderH1}) now show that
 the number of conjugacy classes of irreducible metabelian representations is  given by
$$ \frac{1}{p^k}\left( |H_1(L_{p^k})| - |H_1(L_{p^{k-1}})|\right)  =\prod_{j=1}^{p^k}\Delta_K(e^{2\pi i j/p^k})-\prod_{j=1}^{p^{k-1}}\Delta_K(e^{2\pi i j/p^{k-1}}).$$
 This agrees with the formula given by the theorem in the case $n=p^k$ is a prime power.
This also proves Corollary \ref{cor:mainthm}.

Next   consider the case $n=p^k q^\ell$, where $p$ and $q$ are distinct primes.
If $H_1(L_ {p^k q^\ell})$ is finite,
then  Theorem \ref{mainthm} implies that
the number of conjugacy classes of irreducible metabelian representations is finite and is given by
$$ \frac{1}{p^k q^\ell}\# \left\{\chi:H_1(L_{p^k q^\ell}) \to S^1 \mid \chi \text{ does not factor through $H_1(L_m)$   for any $m |p^k q^\ell$} \right\}.$$
In this case, any character   $\chi:H_1(L_{p^k q^\ell}) \to S^1$
which factors through $H_1(L_m)$ for some $m | p^k q^\ell$
must also factor through either $H_1(L_{p^k q^{\ell-1}})$ or $H_1(L_{p^{k-1} q^\ell})$.
Further, by Lemma \ref{lem:gcd}, any character $ \chi:H_1(L_{p^k q^\ell}) \to S^1$ which factors through both $H_1(L_{p^k q^{\ell-1}})$ and $H_1(L_{p^{k-1} q^\ell})$ must also factor through
 $H_1(L_{p^{k-1} q^{\ell-1}})$.
Lemma \ref{lem:finite} now shows that
 the number of conjugacy classes of irreducible metabelian representations is  given by
$$ |H_1(L_{p^k q^\ell})|-|H_1(L_{p^k q^{\ell-1}})|-|H_1(L_{p^{k-1} q^\ell})|+|H_1(L_{p^{k-1} q^{\ell-1}})|.$$
This agrees with the formula given by the theorem in the case $n=p^kq^\ell$.

Now consider the general case $n=p_1^{k_1}\cdots p_r^{k_r}$, where $p_1, \dots, p_r$ are distinct primes. We will show that the number of conjugacy classes of irreducible metabelian representations is  given by
$$ \frac{1}{n} \sum_{s=0}^{r}  \sum_{i_1 < \cdots < i_s} (-1)^s \left|
  H_1\left(L_{n/(p_{i_1}...p_{i_s})}\right)\right|.$$
Notice that this formula agrees with the one given in the theorem.

Assume $H_1(L_n)$ is finite, and apply Theorem \ref{mainthm} to see that
the number of conjugacy classes of irreducible metabelian representations is finite and is given by
$$ \frac{1}{n}\# \left\{\chi:H_1(L_n) \to S^1 \mid \chi \text{ does not factor through $H_1(L_m)$   for any $m |n$} \right\}.$$
In this case, since $n=p_1^{k_1}\cdots p_r^{k_r}$, any character   $\chi:H_1(L_{n}) \to S^1$
which factors through $H_1(L_m)$ for some $m | n$
must factor through $H_1(L_{n/p_i})$ for some $1 \leq i \leq r$.
Further,  by Lemma \ref{lem:gcd}, for $1\leq i < j \leq r$, any character $ \chi:H_1(L_{n}) \to S^1$ which factors through $H_1(L_{n/p_i})$ and $H_1(L_{n/p_j})$  must also factor through $H_1(L_{n/(p_i p_j )})$. Repeated application of Lemma \ref{lem:gcd} gives the general statement that,
for $1 \leq i_1 <  \cdots < i_s \leq r$,
any character $ \chi:H_1(L_{n}) \to S^1$ which factors through each of
$H_1(L_{n/p_{i_1}}), H_1(L_{n/p_{i_2}}), \ldots, H_1(L_{n/p_{i_s}})$
must also factor through $H_1(L_{n/(p_{i_1} \cdots p_{i_s})})$.

Using this fact, Lemma \ref{lem:finite}, and the
principle of inclusion-exclusion, we obtain the desired result.
\end{proof}

\subsection{Existence results}

In this section, we prove several results on existence of irreducible metabelian $\SL(n,\CC)$ representations of knot groups  and on existence of faithful metabelian $\SL(n,\CC)$ representations of knot groups. Notice that Theorem \ref{mainthm}
implies that any irreducible metabelian representation $\al:\pi_1(X_K)\to \SL(n,\CC)$
of a knot group sends the meridian to a matrix of order $n$ and as such is not faithful.

We begin with the problem of existence of irreducible
representations.
If $K \subset S^3$ is a knot with trivial Alexander
polynomial, then $\pi_1(X_K)^{(1)} = \pi_1(X_K)^{(2)}$, i.e. any metabelian representation is already abelian.
This shows there are no irreducible
metabelian representations $\al:\pi_1(X_K) \to \SL(n,\CC)$ for any $n \geq 2$
for knots with  trivial Alexander
polynomial.

On the other hand, if the Alexander polynomial is not trivial,
then we will see that there always exist
irreducible metabelian $\SL(n,\CC)$ representations of $\pi_1(X_K)$.
In fact, by using information about the homology groups $H_1(L_n)$
of the $n$-fold branched covers  of $K$, we prove the
existence of infinitely many conjugacy classes of irreducible metabelian representations in many cases.
 Fortunately for us, the
homology groups  $H_1(L_n)$ have been
extensively studied, and we shall make frequent use of the ideas and results from many of the excellent papers on the subject, including \cite{Go72, Ri90, GS91, SW02, Ln02}.

In what follows, we denote by
$X_n$ the character variety of metabelian representations
$\al:\pi_1(X_K) \to \SL(n,\CC),$ and by $X^*_n \subset X_n$ the subvariety
of characters of irreducible representations.
Note that since $X^*_n$ is a variety it either consists of finitely many points
or
it contains positive dimensional components. It follows from Theorem \ref{thm:classirrgl} that  $X^*_n$  consists of finitely many points
if and only if there exist only finitely many conjugacy classes of irreducible
metabelian representations $\pi_1(X_K)\to \SL(n,\CC)$.

The following lemma is a reformulation of Theorem \ref{mainthm}.

\begin{lemma} \label{lem1}
Let $n\in \NN$ such that $H_1(L_n)$ is finite. Then the following hold:
\begin{enumerate}
\item[(i)] the variety $X^*_n$ consists of finitely many points or is empty,
\item[(ii)] any irreducible representation $\pi_1(X_K)\to \SL(n,\CC)$ is conjugate to a unitary representation.
\end{enumerate}
\end{lemma}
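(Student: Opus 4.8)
The plan is to recognize that this lemma is simply a restatement of Theorem \ref{mainthm} in the language of the metabelian character variety, so that both parts follow by directly invoking earlier results rather than by any fresh argument. First I would set up the correspondence: recall the dichotomy noted above the lemma, namely that since $X^*_n$ is a variety it either consists of finitely many points or contains a positive-dimensional component, and that (via Theorem \ref{thm:classirrgl}) $X^*_n$ consists of finitely many points \emph{precisely} when there are only finitely many conjugacy classes of irreducible metabelian representations $\pi_1(X_K) \to \SL(n,\CC)$.

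For part (i), I would then feed into this equivalence the hypothesis that $H_1(L_n)$ is finite: by Theorem \ref{mainthm}(ii) the number of conjugacy classes of irreducible metabelian $\SL(n,\CC)$ representations is finite (indeed a formula is given). Combining the two statements forces $X^*_n$ to be a finite set of points, ruling out any positive-dimensional component; and if that number happens to be zero, $X^*_n$ is empty, which is the second alternative in (i). For part (ii), I would observe that the irreducible representations whose characters populate $X^*_n$ are exactly the irreducible metabelian $\SL(n,\CC)$ representations of $\pi_1(X_K)$, so that the assertion is verbatim the content of Theorem \ref{mainthm}(iii): when $H_1(L_n)$ is finite, every such representation is conjugate to a unitary one.

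The only point needing any care — the closest thing to an obstacle — is the bookkeeping identification between points of $X^*_n$ and conjugacy classes of irreducible representations, i.e. the standard fact that an irreducible $\SL(n,\CC)$ representation is determined up to conjugacy by its character. But this is exactly what underlies the equivalence already drawn from Theorem \ref{thm:classirrgl}, so no new work is required; I would simply cite it. In summary, there is no genuine difficulty: the proof consists of assembling Theorem \ref{mainthm}(ii) and (iii) together with the previously established correspondence between $X^*_n$ and conjugacy classes of irreducible metabelian representations.
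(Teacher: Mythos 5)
Your proposal is correct and matches the paper exactly: the paper offers no separate argument for this lemma, stating only that it is a reformulation of Theorem \ref{mainthm}, which is precisely the assembly of Theorem \ref{mainthm}(ii), (iii) and the dichotomy for $X^*_n$ that you describe.
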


 The next lemma treats the case when $H_1(L_n)$ is infinite and shows that
$X^*_n$ is either empty or contains positive dimensional components.

\begin{lemma} \label{lem2}
Let $n$ such that $H_1(L_n)$ is infinite. Then the following are equivalent:
\begin{enumerate}
\item[(a)] there exists an irreducible representation $\pi_1(X_K)\to \SL(n,\CC)$,
\item[(b)] there exist infinitely many  conjugacy classes of irreducible representations $\pi_1(X_K)\to \SL(n,\CC)$, none of which is conjugate to a unitary representation,
\item[(c)] there exist infinitely many  conjugacy classes of irreducible unitary representation $\pi_1(X_K)\to SU(n)$,
\item[(d)] there exists a character $\chi:\Tor (H_1(L_n))\to S^1$ that does not factor through
$\Tor( H_1(L_n)) \to \Tor( H_1(L_\ell))$
for any $\ell |n$  with $b_1(L_\ell)=b_1(L_n)$.
\end{enumerate}
\end{lemma}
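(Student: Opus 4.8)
The plan is to prove the four-way equivalence in Lemma \ref{lem2} for the case when $H_1(L_n)$ is infinite by building on the classification machinery already established, particularly Theorem \ref{thm:classirrsl}, which tells us that irreducible metabelian $\SL(n,\CC)$ representations correspond (up to conjugacy) to characters $\chi:H\to\CC^*$ of order exactly $n$, where $H=H_1(X_K;\ZZ[t^{\pm 1}])$. The strategy is to show $(a)\Leftrightarrow(d)$, then $(d)\Rightarrow(c)\Rightarrow(b)\Rightarrow(a)$, closing the cycle. The conceptual heart of everything is the translation between characters of order $n$ and characters of the (co)homology of the branched covers $L_n$, so first I would carefully set up this dictionary.

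For the equivalence $(a)\Leftrightarrow(d)$, I would argue as follows. By Theorem \ref{thm:classirrsl}, an irreducible representation exists if and only if there is a character $\chi:H\to\CC^*$ of order exactly $n$, i.e. one factoring through $H/(t^n-1)\cong H_1(L_n)$ but not through $H/(t^\ell-1)\cong H_1(L_\ell)$ for any proper $\ell\mid n$. Now the key new ingredient over the finite case is to separate the free and torsion parts: since $H_1(L_n)\cong\ZZ^{b_1(L_n)}\oplus\Tor(H_1(L_n))$, I would show that the surjection $H_1(L_n)\to H_1(L_\ell)$ restricts on free parts in a controlled way, and that when $b_1(L_\ell)=b_1(L_n)$ the induced map on free parts is an isomorphism (this is where the Betti number hypothesis enters). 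A character factors through $H_1(L_\ell)$ iff it vanishes on the kernel of the projection; using Lemma \ref{lem:gcd}, the relevant obstruction is detected entirely on the torsion subgroup precisely when $b_1(L_\ell)=b_1(L_n)$. I would therefore reduce the existence of an order-$n$ character on $H_1(L_n)$ to the existence of the character $\chi$ on $\Tor(H_1(L_n))$ described in $(d)$, taking care that a character on the torsion part can always be extended over the free part (e.g. trivially on free generators) to yield a genuine order-$n$ character on all of $H_1(L_n)$.

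For $(d)\Rightarrow(c)$ and the infinitude in $(b),(c)$, I would exploit the positive-dimensional freedom coming from $b_1(L_n)>0$. Given one suitable torsion character, I can twist it by an $S^1$-valued character of the free part $\ZZ^{b_1(L_n)}$, i.e. by any point of a torus $(S^1)^{b_1(L_n)}$; since $b_1(L_n)\geq 1$ (as $H_1(L_n)$ is infinite), this produces a continuous family of characters of order $n$, all unitary by construction, and one checks these remain of order exactly $n$ because the order obstruction lives on the torsion part which is unchanged by the twist. Dividing by the finite $\ZZ/n$-action of Theorem \ref{thm:classirrsl}(ii) still leaves infinitely many conjugacy classes, giving $(c)$, and via $\al_\chi$ these yield the unitary representations into $SU(n)$ after verifying the determinant-one condition using Lemma \ref{lem:sl}. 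The implication $(c)\Rightarrow(b)$ is then immediate since unitary representations are in particular $\SL(n,\CC)$ representations, except I must also produce the non-unitary ones claimed in $(b)$: for this I would take a fixed torsion character and twist instead by a \emph{non-unitary} character of the free part, i.e. by a point of $(\CC^*)^{b_1(L_n)}$ not lying in the compact torus, and argue that such representations cannot be conjugate to unitary ones by comparing the absolute values of the weights $\chi,t\chi,\dots,t^{n-1}\chi$ restricted to the free part. Finally $(b)\Rightarrow(a)$ is trivial.

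The main obstacle I anticipate is the careful bookkeeping in $(a)\Leftrightarrow(d)$ around the Betti number condition $b_1(L_\ell)=b_1(L_n)$: I need to verify precisely that the projection $\Tor(H_1(L_n))\to\Tor(H_1(L_\ell))$ and the free-part comparison interact correctly so that ``factoring through $H_1(L_\ell)$'' is equivalent to ``the torsion restriction factors through $\Tor(H_1(L_\ell))$'' exactly under this hypothesis, and not under the others. In particular I must rule out the subtlety that a character might fail to factor through $H_1(L_\ell)$ only because of its behavior on free generators (which would be irrelevant to the branched-cover count) rather than genuinely detecting order $n$; the Betti number condition is exactly what guarantees the free parts match up so that the distinction collapses to the torsion subgroup. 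Getting this reduction clean — and confirming that the twisting constructions in $(b),(c)$ preserve the \emph{exact} order $n$ rather than some proper divisor — is where I expect to spend the most effort.
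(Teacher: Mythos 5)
Your overall strategy (split $H_1(L_n)$ as $F_n\oplus\Tor(H_1(L_n))$, translate order-$n$ characters into the condition (d) on the torsion part, and produce infinitely many examples by twisting a fixed torsion character by characters of the free part) is the same as the paper's. However, there is a genuine gap at the central step, namely your claim that ``the order obstruction lives on the torsion part which is unchanged by the twist,'' and the accompanying suggestion that one may extend $\chi$ ``trivially on free generators'' to get an order-$n$ character. This is only true for divisors $\ell\mid n$ with $b_1(L_\ell)=b_1(L_n)$: for those, the kernel of $H_1(L_n)\to H_1(L_\ell)$ is finite, hence contained in the torsion subgroup, and factorization is indeed detected by $\chi$ alone. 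For a divisor $\ell$ with $b_1(L_\ell)<b_1(L_n)$ --- which condition (d) says nothing about --- the kernel has positive rank, and whether $\al=\be\oplus\chi$ kills it depends on $\be$. Concretely, if the kernel happens to lie in the chosen free complement $F_n$, then the trivial extension $\be=1$ makes $\al$ factor through $H_1(L_\ell)$ even though $\chi$ satisfies (d), so $\al$ has order a proper divisor of $n$ and $\al_{\chi}$ is reducible. The same problem afflicts your twisted families: a particular twist by a point of $(S^1)^{b_1(L_n)}$ or $(\CC^*)^{b_1(L_n)}$ may land in the pullback $\pi^*\Hom(H_1(L_\ell),\CC^*)$ for such an $\ell$.

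The paper closes exactly this gap with a genericity argument rather than a pointwise one: inside the fiber $R_\chi(H_1(L_n))=\{\al\mid \al|_{\Tor(H_1(L_n))}=\chi\}$, which is a variety of dimension $b_1(L_n)>0$, the locus $\pi^*\Hom(H_1(L_\ell),\CC^*)$ is empty when $b_1(L_\ell)=b_1(L_n)$ (by (d)) and is a closed subvariety of dimension at most $b_1(L_\ell)<b_1(L_n)$ otherwise; hence the complement of the finite union over all $\ell\mid n$ is open and dense, and contains infinitely many non-unitary (respectively, unitary) characters of order exactly $n$. So your construction is salvageable, but you must replace ``every extension/twist works'' by ``a generic one does,'' and this dimension count is the actual content of the implication $(d)\Rightarrow(b),(c)$. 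Your argument for $(a)\Rightarrow(d)$ (equivalently $\neg(d)\Rightarrow\neg(a)$) via the finite-kernel observation is correct and matches the paper, and your remark that $(c)$ does not formally imply $(b)$ is well taken --- the paper likewise derives $(b)$ and $(c)$ separately from $(d)$.
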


\begin{proof}

Given an abelian group $A$ we write
 $R(A)=\Hom(A,\CC^*)$.
Note that $R(A)$ is a complex variety of dimension $\rk(A)$.
If $\ell|n$, the  epimorphism $\pi:H_1(L_n)\to H_1(L_\ell)$ induces an injective map $\pi^*:R(H_1(L_\ell))\to R(H_1(L_n))$.
We say that $\chi\in R(A)$ is unitary if $\chi$ lies
in $U(A):=\Hom(A,S^1)\subset R(A)$, otherwise we say $\chi$ is non--unitary. It follows from Theorem \ref{thm:classirrgl}
that given a character $\chi\in R(H_1(L_n))$ of order $n$, the representation $\al_\chi$
is conjugate to a unitary representation if and only if $\chi$ is  unitary, i.e. if and only if $\chi\in U(H_1(L_n))$.

For any unitary character $\chi: \Tor(H_1(L_n)) \to S^1$, we define $R_\chi(H_1(L_n)) \subset R(H_1(L_n))$ by setting
$$R_\chi(H_1(L_n)) =\{\al :H_1(L_n)\to \CC^* \mid \al |_{\Tor(H_1(L_n))} = \chi \}.$$
Fixing a splitting
$H_1(L_n)=F_n\oplus \Tor(H_1(L_n))$, where $F_n$ is torsion free, we see that any character
$\be:F_n\to \CC^*$ determines a character $\al=\be \oplus \chi:F_n\oplus \Tor(H_1(L_n))\to \CC^*$
with $\al \in R_\chi(H_1(L_n)).$
Thus $R_\chi(H_1(L_n))$ is a complex variety of the same dimension as
$R(F_n),$ indeed  $\dim R_\chi(H_1(L_n))= \rk(H_1(L_n))>0$

We first show (d) implies (b),
so let $\chi : \Tor(H_1(L_n)) \to S^1$ be the character whose existence
is guaranteed by (d).
To conclude (b), it suffices by Theorem \ref{thm:classirrsl}  to show that the set

\begin{equation}\label{Kdense}
R_\chi(H_1(L_n)) \, \,\, \sm \,\,\, \bigcup_{\ell |n} \pi^* R(H_1(L_\ell))
\end{equation}
contains infinitely many non--unitary characters.
If $\ell|n$ with $b_1(L_n)=b_1(L_\ell)$, then
$R_\chi(H_1(L_n)) \cap \pi^* R(H_1(L_\ell))=\varnothing$ by hypothesis.
On the other hand, if $\ell|n$ with $b_1(L_\ell)<b_1(L_n)$, then $\pi^* R(H_1(L_\ell)))$ has codimension at least one in $R(H_1(L_n))$.
It follows that the set (\ref{Kdense}) is open and dense in $R_\chi(H_1(L_n))$ and therefore contains infinitely many non--unitary characters.

To see that (d) implies (c), let $U_\chi(H_1(L_n)) \subset U(H_1(L_n))$ be the corresponding
subset of unitary characters defined by setting
 $$U_\chi(H_1(L_n)) =\{\al :H_1(L_n)\to S^1\mid \al |_{\Tor(H_1(L_n))} = \chi \}.$$

The same argument as above now shows that
$$ U_\chi(H_1(L_n))\, \,  \sm \,\, \bigcup\limits_{\ell |n} \pi^* U(H_1(L_\ell))$$
contains infinitely many unitary characters, and Theorem \ref{thm:classirrsl} applies to show (c).

Now we show that if (d) does not hold, then every character  $\al: H_1(L_n)\to \CC^*$  factors through $H_1(L_n) \to H_1(L_\ell)$
for some $\ell$ with $\ell|n$.
Consequently, there are no characters of order $n$, and
Theorem \ref{thm:classirrsl} shows that  none of (a), (b) or (c) hold.

Suppose $\al: H_1(L_n) \to \CC^*$ and
fix  a splitting $H_1(L_n) = F_n \oplus \Tor(H_1(L_n)),$
where $F_n$ is torsion-free.
Writing $\al=\be\oplus \chi:F_n \oplus \Tor(H_1(L_n)) \to \CC^*$,
by assumption, we see $\chi$ must factor through  $\Tor(H_1(L_n)) \to \Tor(H_1(L_\ell))$ for some  $\ell|n$ with $b_1(L_\ell)=b_1(L_n)$.
Let $\chi':\Tor(H_1(L_\ell))\to S^1$ be the corresponding character.
Since $b_1(L_n)=b_1(L_\ell)$,
the restriction of $\pi_*$  gives an isomorphism from $F_n$ to
some torsion-free submodule of $H_1(L_\ell)$,
and since $\pi_*:H_1(L_n)\to H_1(L_\ell)$ is surjective,
the image $F_\ell = \pi_*(F_n)$
determines a splitting $H_1(L_\ell) = F_\ell \oplus \Tor(H_1(L_\ell))$.
Letting $\be': F_\ell \to \CC^*$ be the character  defined by the condition $\be = \be' \circ \pi_*$,
it follows easily that $\al$ is the pullback of $\al'=\be'\oplus \chi':F_\ell \oplus \Tor(H_1(L_\ell)) \to \CC^*$.
Thus $\al$ factors through $H_1(L_n) \to H_1(L_\ell)$,
and this completes the proof.
\end{proof}

\begin{theorem} \label{thm:numrepa}
Suppose $K \subset S^3$ is a knot with Alexander polynomial such that
a zero of $\Delta_K(t)$ is a root of unity.
Let $m$ be the minimal number such that every root of unity that is a zero of $\Delta_K(t)$ is in fact an $m$--th root of unity.
Let $\la_1(t)$ 
be the first Alexander invariant of $K$.

\begin{enumerate}
\item[(i)]  If $\la_1(t) | (t^m-1)$, then
there exist irreducible metabelian $\SL(n,\CC)$ representations
of the knot group $\pi_1(X_K)$  only for
$2\leq n \leq m$.
Furthermore  the variety $X^*_m$ contains positive dimensional components.

\item[(ii)]  If $\la_1(t) \notdiv (t^m-1)$, then
the variety $X^*_{km}$ contains positive dimensional components  for infinitely many $k \geq 2$.

\item[(iii)] If $(n,m)=1$, then $X_n^*$ is finite or empty. If all zeroes of $\Delta_K(t)$ are roots of unity, then  $X_n^* = \varnothing$ for all $(n,m)=1.$

\item[(iv)] If not all zeroes of $\Delta_K(t)$ are roots of unity, then there exist infinitely many $n$ with $(n,m)=1$ for which $X_n^*$ is nonempty.
\end{enumerate}
\end{theorem}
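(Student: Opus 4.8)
The plan is to translate every assertion into the combinatorics of characters of the Alexander module $H=H_1(X_K;\ZZ[t^{\pm 1}])$, using Theorem \ref{thm:classirrsl} (irreducible metabelian $\SL(n,\CC)$ representations are the $\ZZ/n$--orbits of characters $\chi\colon H\to\CC^*$ of order $n$), the isomorphism $H/(t^n-1)\cong H_1(L_n)$ of Lemma \ref{lem:hlk}, and the dichotomy of Lemmas \ref{lem1} and \ref{lem2}. I will repeatedly use the fact recalled after Theorem \ref{mainthm} that $H_1(L_n)$ is finite if and only if no zero of $\Delta_K(t)$ is an $n$--th root of unity, and that $b_1(L_n)$ counts (with multiplicity $\phi(d)$) the root--of--unity zeros of $\Delta_K$ whose order $d$ divides $n$. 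The basic observation is that $(n,m)=1$ forces $H_1(L_n)$ to be finite: a zero of $\Delta_K$ that is an $n$--th root of unity would be a primitive $d$--th root with $d\mid n$, hence $d\mid m$ by the definition of $m$, so $d\mid\gcd(n,m)=1$, forcing $d=1$, contradicting $\Delta_K(1)=\pm 1$.

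Part (iii) follows from this. Finiteness of $H_1(L_n)$ together with Lemma \ref{lem1}(i) gives the first assertion. For the second, assume all zeros of $\Delta_K$ are roots of unity and write $\Delta_K\doteq\prod_d\Phi_d^{e_d}$ with each $d\mid m$ and $d>1$. Using (\ref{orderH1}) I would evaluate $|H_1(L_n)|=\prod_d\big|\prod_{\zeta^n=1,\,\zeta\neq 1}\Phi_d(\zeta)\big|^{e_d}$; since $(n,d)=1$, raising to the $n$--th power permutes the primitive $d$--th roots of unity, so $\mathrm{Res}(t^n-1,\Phi_d)=\pm\Phi_d(1)$ and each inner product equals $\pm 1$. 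Hence $|H_1(L_n)|=1$, so $H_1(L_n)=0$, there are no nontrivial characters, and $X_n^*=\varnothing$.

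For part (iv) I would take $n=p$ prime with $p\nmid m$; there are infinitely many such, and each gives $H_1(L_p)$ finite, so by Theorem \ref{thm:count} the number of conjugacy classes at level $p$ is $\tfrac1p(|H_1(L_p)|-1)$. Thus it suffices to show $|H_1(L_p)|>1$ for infinitely many such $p$. Since not all zeros of $\Delta_K$ are roots of unity, Kronecker's theorem together with the symmetry $\Delta_K(t)\doteq\Delta_K(t^{-1})$ (which handles both the monic and non--monic cases) gives Mahler measure $M(\Delta_K)>1$. The homology growth estimates of \cite{Go72,Ri90,GS91,SW02} then give $\tfrac1p\log|H_1(L_p)|\to\log M(\Delta_K)>0$ along primes $p\nmid m$, so $|H_1(L_p)|\to\infty$ and $X_p^*\neq\varnothing$ for all large such $p$.

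Parts (i) and (ii) hinge on reading $\la_1(t)$ as the minimal polynomial of multiplication by $t$ on $H\otimes\QQ$ (the largest invariant factor), so that $\la_1\mid(t^m-1)$ means precisely that the $t$--action is semisimple with all eigenvalues roots of unity of order dividing $m$. For any such knot, minimality of $m$ shows every proper $\ell\mid m$ has $b_1(L_\ell)<b_1(L_m)$, so the hypothesis of Lemma \ref{lem2}(d) is vacuous at level $m$ and, as $b_1(L_m)>0$, the variety $X_m^*$ has a positive--dimensional component. In case (i) I would prove that the surjection $H_1(L_n)\to H_1(L_{\gcd(n,m)})$ of Lemma \ref{lem:hlk} is an isomorphism for every $n$: it is a rational isomorphism because only the cyclotomic orders $d\mid m$ contribute to $b_1$, and the squarefreeness imposed by $\la_1\mid(t^m-1)$ eliminates the integral torsion in its kernel (reducing to modules over the maximal orders $\ZZ[\zeta_d]$). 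Since $\gcd(n,m)<n$ for $n>m$, every finite--order character then factors through a proper subcover, no character has order $n$, and $X_n^*=\varnothing$ for $n>m$. In case (ii), $\la_1\nmid(t^m-1)$ means $H$ carries either a non--cyclotomic factor (of Mahler measure $>1$) or a repeated cyclotomic factor; for a prime $p\nmid m$ the only proper $\ell\mid mp$ with $b_1(L_\ell)=b_1(L_{mp})$ is $\ell=m$, and in either case $|\Tor H_1(L_{mp})|$ grows without bound in $p$ (exponentially via the Mahler measure, respectively polynomially via the accumulating norms $N_{\QQ(\zeta_d)/\QQ}\big((t^{mp}-1)/\Phi_d\big)$). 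Hence for large $p$ the surjection $\Tor H_1(L_{mp})\to\Tor H_1(L_m)$ has nontrivial kernel, producing a character satisfying Lemma \ref{lem2}(d), so $X_{mp}^*$ has a positive--dimensional component for infinitely many primes $p$, which is (ii). The step I expect to be hardest is exactly this integral control of $\Tor H_1(L_n)$ — triviality of the kernel in case (i) and strict growth in case (ii) — since both are immediate rationally but at the integral level rest on the squarefree structure encoded by $\la_1$ and on the explicit branched--cover homology computations of \cite{Go72,Ri90,GS91,SW02}.
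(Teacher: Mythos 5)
Your proposal is correct in substance and follows the paper's overall strategy --- reduce everything to characters of order $n$ via Theorem \ref{thm:classirrsl}, get the positive--dimensional components from Lemma \ref{lem2} combined with Lemma \ref{lem:b1}, and drive the existence statements with the growth of $\Tor H_1(L_n)$ from \cite{Go72,Ri90,GS91,SW02} --- but several steps are handled by genuinely different routes. For (iii) the paper simply cites \cite[Theorem 4.5]{Go72} to get $H_1(L_n)=0$; your resultant computation $\prod_{\zeta^n=1,\,\zeta\neq 1}\Phi_d(\zeta)=\pm 1$ for $(n,d)=1$ is a correct self--contained substitute. For (ii) and (iv) the paper works with arbitrary $k$ (resp.\ $n$ coprime to $m$) and needs a minimality argument over auxiliary sets $T_K$, $S_N$ to exclude factoring through intermediate covers; your restriction to primes $p\nmid m$ collapses the divisor lattice (the only competing level is $\ell=m$, resp.\ $\ell=1$) and removes that bookkeeping, which is a real simplification. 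Both versions, however, rest on the same external input, namely that $|\Tor H_1(L_{km})|\to\infty$ when $\la_1(t)\notdiv(t^m-1)$; the paper imports this from \cite[Theorem 4.7]{Go72} and \cite[Theorem 2.1]{SW02}, and your two--case sketch (Mahler measure versus accumulating cyclotomic norms) is precisely the content of those results, so it is better cited than re--derived. The one place where your argument is more roundabout than necessary is (i): instead of a rational--isomorphism--plus--integral--torsion argument ``over maximal orders,'' observe that $\la_1(t)\mid(t^m-1)$ together with the $\ZZ$--torsion--freeness of $H=H_1(X_K;\ZZ[t^{\pm1}])$ gives $(t^m-1)H=0$ outright, whence $H/(t^n-1)=H/(t^{\gcd(n,m)}-1)$ by the gcd identity used in Lemma \ref{lem:gcd}; this is exactly the periodicity theorem of \cite{Go72} that the paper invokes, and it makes the integral isomorphism $H_1(L_n)\cong H_1(L_{(n,m)})$ immediate rather than the ``hardest step'' you flag.
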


For the proof we need the following lemma.

\begin{lemma}\label{lem:b1}
Let $m$ be the minimal number such that every root of unity which is a zero of $\Delta_K(t)$ is in fact an $m$--th root of unity.
Then for any $k$ we have $b_1(L_{km})=b_1(L_m)$, and for any $n$ such that $m \notdiv n$ we have $b_1(L_n)<b_1(L_m)$.
\end{lemma}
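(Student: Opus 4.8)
The plan is to reduce everything to an explicit formula for $b_1(L_n)$ in terms of the cyclotomic factors of $\Delta_K(t)$. First I would rationalize the isomorphism $H/(t^n-1)\cong H_1(L_n)$ supplied by Lemma \ref{lem:hlk}: writing $H_\QQ = H\otimes\QQ = H_1(\widetilde{X_K};\QQ)$, this gives $b_1(L_n)=\dim_\QQ\big(H_\QQ/(t^n-1)\big)$. The module $H_\QQ$ is a finitely generated torsion module over the principal ideal domain $\QQ[t^{\pm1}]$ whose order is $\Delta_K(t)$, so the structure theorem yields $H_\QQ\cong\bigoplus_i \QQ[t^{\pm1}]/(p_i(t)^{e_i})$ with $\prod_i p_i(t)^{e_i}\doteq\Delta_K(t)$ and each $p_i$ irreducible over $\QQ$.

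Next I would compute each summand. Since $\dim_\QQ \QQ[t^{\pm1}]/(p_i^{e_i},t^n-1)=\deg\gcd(p_i^{e_i},t^n-1)$ and $t^n-1=\prod_{d\mid n}\Phi_d$ is squarefree (here $\Phi_d$ denotes the $d$-th cyclotomic polynomial), this gcd equals $p_i$ when $p_i\mid t^n-1$ and $1$ otherwise; and because $p_i$ is irreducible with all roots roots of unity exactly when $p_i=\Phi_d$, the condition $p_i\mid t^n-1$ is equivalent to $p_i=\Phi_d$ for some $d\mid n$. Summing over $i$, I obtain the key formula
\[
b_1(L_n)=\sum_{\substack{d\mid n\\ \Phi_d\mid \Delta_K}}\deg\Phi_d .
\]
Setting $D=\{d:\Phi_d\mid\Delta_K\}$ — all of whose elements satisfy $d\ge 2$, since $\Delta_K(1)=\pm1$ forces $t-1\nmid\Delta_K$ — a primitive $d$-th root of unity is an $m$-th root of unity if and only if $d\mid m$, so the minimality hypothesis on $m$ says precisely that $m=\operatorname{lcm}(D)$.

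The two claims then fall out from divisibility bookkeeping. For the first, every $d\in D$ divides $m$, hence divides $km$, so the constraints ``$d\mid km$'' and ``$d\mid m$'' are both vacuous on $D$; thus $b_1(L_{km})=\sum_{d\in D}\deg\Phi_d=b_1(L_m)$. For the second, suppose $m\nmid n$. Were every $d\in D$ to divide $n$, we would get $m=\operatorname{lcm}(D)\mid n$, a contradiction; hence some $d_0\in D$ has $d_0\nmid n$, and the term $\deg\Phi_{d_0}>0$ present in $b_1(L_m)$ is absent from $b_1(L_n)$. Since $b_1(L_n)$ is a sum over the subcollection $\{d\in D:d\mid n\}$ of the terms defining $b_1(L_m)$, this yields $b_1(L_n)<b_1(L_m)$. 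I expect the main work to be in justifying the displayed formula for $b_1(L_n)$ — specifically, the squarefreeness of $t^n-1$ together with the identification of exactly which irreducible factors of $\Delta_K$ survive reduction modulo $t^n-1$, which is where the multiplicities $e_i$ must be shown to play no role.
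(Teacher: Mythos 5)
Your proof is correct and takes essentially the same route as the paper, which simply quotes the well-known formula $b_1(L_n)=\#\{z_i \mid z_i^n=1\}$ for the roots $z_i$ of $\Delta_K$ (with a parenthetical remark that it follows from the structure theory of $H_1(X_K;\CC[t^{\pm 1}])$ over a PID, i.e. exactly the derivation you carry out) and then observes that the two claims are clear from that formula and the definition of $m$. One small correction to your displayed ``key formula'': if the same cyclotomic polynomial $\Phi_d$ occurs in several primary summands of $H_\QQ$ (e.g.\ the granny knot, where $H_\QQ\cong(\QQ[t^{\pm1}]/\Phi_6)^2$), the term $\deg\Phi_d$ must carry the coefficient $m_d=\#\{i \mid p_i=\Phi_d\}$; since each such coefficient is positive and independent of $n$, your divisibility bookkeeping and both conclusions are unaffected.
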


\begin{proof}[Proof of Lemma \ref{lem:b1}]
We write $\Delta_K(t)=\prod_{i=1}^d (t-z_i) $ for $z_1,\dots,z_d\in \CC$.
It is well--known that
\begin{equation} \label{rankH1}
 b_1(L_n) = \# \{ z_i \mid 1 \leq i \leq d, \; z_i^n=1\}.
\end{equation}
(This formula can  be deduced by studying $H_1(X_K;\CC[t,t^{-1}])=H_1(X_K;\ZZ[t,t^{-1}])\otimes \CC$ and using the fact that $\CC[t,t^{-1}]$ is a PID.)
It is clear from the formula and the definition of $m$ that $b_1(L_n)$ is maximal if and only if $m|n$.
\end{proof}

\begin{proof}[Proof of Theorem \ref{thm:numrepa}]

We first prove (i).
If $\la_1(t)|(t^m-1)$, then the main theorem of \cite{Go72}
shows that the first homology groups of $L_n$  are periodic and moreover
$H_1(L_n) =H_1(L_{(n,m)})$.
This implies that every character $\chi:H_1(L_n) \to \CC^*$ factors
through $H_1(L_n) \to H_1(L_{(m,n)})$. Thus if $n>m,$ no character has order $n$. This
shows that irreducible metabelian
representations $\al:\pi_1(X_K) \to \SL(n,\CC)$ can only occur for  $n$
with  $2 \leq n \leq m$.

It is an immediate consequence of Lemmas \ref{lem2} and \ref{lem:b1} that
 the variety $X^*_m$ contains positive dimensional components

We now prove (ii).
By Lemmas \ref{lem2} and \ref{lem:b1}  we only have to show that
for any $K$  there exists an $k>K$ and a character $\Tor(H_1(L_{km}))\to S^1$ which does not factor through a character $\Tor(H_1(L_{\ell m}))\to S^1$ for any  $\ell|k$.
Set
$M= \sum_{\ell=2}^{K}   |\Tor(H_1(L_{\ell m}))|$ and define
\begin{eqnarray*}
T_{K} &\hspace{-5pt}=\hspace{-5pt}& \{ k \mid \text{ there exists a character ${\chi}:\Tor(H_1(L_{km})) \to \CC^*$ which does not factor } \\
&& \hspace{0.35in} \text{through $\Tor (H_1(L_{km})) \to \Tor(H_1(L_{\ell m}))$ for any $\ell | k$ with $\ell \leq K$} \}.
\end{eqnarray*}
Since $\la_1(t) \notdiv (t^m-1)$, it follows from the combination of  \cite[Theorem~4.7]{Go72}
and \cite[Theorem~2.1]{SW02} that
$|\Tor(H_1(L_{km}))| \to \infty$ as $k \to\infty.$
In particular, there exists $k$ with $k>K$ such that  $|\Tor(H_1(L_{km}))| > M$. Clearly this implies that
$T_{K} \neq \varnothing.$

Let $k = \min(T_K)$ and pick a character $\chi:\Tor(H_1(L_{km})) \to \CC^*$ that does not factor through
$\Tor(H_1(L_{km})) \to \Tor (H_1(L_{\ell m}))$ for any $\ell | k$ with $\ell \leq K$.
But then $\chi$ does  not factor through $\Tor(H_1(L_\ell))$ for any $\ell | k$ with $\ell > K$ by the minimality of $k$. The claim now follows from Lemma \ref{lem2}.

We now turn to the proof of (iii). If $(n,m)=1$, then it follows from (\ref{rankH1}) that
$H_1(L_n)$ is finite. In the case that all zeroes of $\Delta_K(t)$ are roots of unity, then it follows from  \cite[Theorem 4.5]{Go72} that in fact $H_1(L_n)$ is trivial. Claim (iii) now follows immediately from Theorem \ref{mainthm}.

Finally we give a proof of (iv). Let $N\in \NN$. We have to show that there exists $n>N$ with $(n,m)=1$ for which there exists an irreducible
metabelian representation $\al:\pi_1(X_K) \to \SL(n,\CC)$.
Set
$M= \sum_{\ell=2}^{N}   |H_1(L_{\ell})|$ and define
\begin{eqnarray*}
S_{N} &\hspace{-5pt}=\hspace{-5pt}& \{ n\mid \text{ $(n,m)=1$ and there exists  a character ${\chi}:\Tor(H_1(L_n)) \to \CC^*$ which does} \\
&& \hspace{0.45in} \text{not factor through $\Tor (H_1(L_n)) \to \Tor(H_1(L_{\ell}))$ for any $\ell | n$ with $\ell \leq N$} \}.
\end{eqnarray*}
By \cite{Ri90} or \cite{GS91}  (or alternatively by \cite[Theorem~2.1]{SW02})  there exists $n$ with $(n,m)=1$ and $n>N$ such that  $|\Tor(H_1(L_{n}))| > M$. Clearly this implies that
$S_{N} \neq \varnothing.$

Let $n = \min(S_N)$ and pick a character $\chi:\Tor(H_1(L_n)) \to \CC^*$ that does not factor through
$\Tor(H_1(L_n)) \to \Tor(H_1(L_\ell))$ for any $\ell | n$ with $\ell \leq N$.
But then $\chi$ does not factor through $\Tor(H_1(L_\ell))$ for any $\ell | n$ with $\ell > N$ by the minimality of $n$. The claim now follows from Theorem \ref{mainthm}.
\end{proof}

The proof of Theorem \ref{thm:numrepa} (iii) and (iv) (setting $m=1$) also immediately gives the following result.

\begin{theorem} \label{thm:numrepb}
Suppose $K \subset S^3$ is a knot with Alexander polynomial $\Delta_K(t) \neq 1$ and
such that no zero of  $\Delta_K(t)$ is a root of unity.
Then for any $n$ the set  $X_n^*$ is finite or empty. Further, $X^*_n$ is nonempty for infinitely many $n$.
\end{theorem}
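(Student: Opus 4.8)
The plan is to obtain this statement as the $m=1$ specialization of parts (iii) and (iv) of Theorem \ref{thm:numrepa}, exactly as the preceding remark indicates. The first step is to observe that the hypothesis ``no zero of $\Delta_K(t)$ is a root of unity'' forces the integer $m$ of Theorem \ref{thm:numrepa} --- the least number such that every root-of-unity zero of $\Delta_K(t)$ is an $m$-th root of unity --- to equal $1$, since the defining condition is then vacuous. Consequently the coprimality constraint $(n,m)=1$ appearing in parts (iii) and (iv) reads $(n,1)=1$, which is automatic, so those two parts apply to \emph{every} $n$. With this reduction in hand, both assertions follow by specializing the corresponding arguments.

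For the finiteness assertion, I would argue as follows. Since no zero of $\Delta_K(t)$ is a root of unity, in particular no zero is an $n$-th root of unity, so by the criterion recalled after Theorem \ref{mainthm} (equation (\ref{orderH1})) the group $H_1(L_n)$ is finite for every $n$. Lemma \ref{lem1}(i) then shows directly that $X_n^*$ consists of finitely many points or is empty. This is precisely the content of Theorem \ref{thm:numrepa}(iii) read with $m=1$, and it settles the first claim for all $n$.

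For the infinitude assertion, I would run the argument of Theorem \ref{thm:numrepa}(iv) verbatim with $m=1$. Fix $N \in \NN$ and set $M = \sum_{\ell=2}^{N} |H_1(L_\ell)|$. The essential input, supplied by \cite{Ri90}, \cite{GS91}, or \cite[Theorem~2.1]{SW02}, is that $|\Tor(H_1(L_n))| = |H_1(L_n)| \to \infty$; this growth is exactly where the hypothesis $\Delta_K(t)\neq 1$ is used. Hence there is some $n>N$ with $|H_1(L_n)| > M$, and taking the minimal such $n$ yields a character $\chi : H_1(L_n) \to \CC^*$ that does not factor through $H_1(L_\ell)$ for any proper divisor $\ell \mid n$, i.e. a character of order $n$. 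Theorem \ref{mainthm} converts $\chi$ into an irreducible metabelian $\SL(n,\CC)$ representation, so $X_n^*$ is nonempty; as $N$ was arbitrary, this produces infinitely many such $n$.

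I expect the proof to be essentially a formal specialization, with no genuinely new difficulty: the only substantive ingredient is the unbounded growth of $|H_1(L_n)|$ for a knot with nontrivial Alexander polynomial, which is imported from the cited literature rather than reproved. The one point deserving a little care is confirming that $m=1$ is the correct value here and that the minimality-and-divisibility bookkeeping used to construct the relevant index set in part (iv) carries over unchanged when $m=1$, so that the minimal $n$ chosen indeed supports a character of full order $n$.
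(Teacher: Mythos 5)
Your proposal is correct and follows exactly the paper's route: the paper derives this theorem by specializing the proofs of Theorem \ref{thm:numrepa}(iii) and (iv) to $m=1$, which is precisely what you do, including the use of the growth of $|H_1(L_n)|$ from \cite{Ri90, GS91, SW02} and the minimal-$n$ index-set argument to produce a character of order $n$.
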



Now we turn our attention to the problem of existence of faithful  representations
of the metabelian quotient of a knot group.
First note that  it follows from Theorem \ref{mainthm} that given a semisimple  representation $\al:\pi_1(X_K)/\pi_1(X_K)^{(2)}\to \SL(n,\CC)$ we have $\al(\mu^n)=I$, i.e.
$\al$ can not be faithful.
Also note that a unitary representation is necessarily semisimple, hence by the above can not be faithful.

On the other hand if we study non--semisimple non--unitary representations,
then we can always find one that is faithful. The next result produces a faithful
representation into $\GL(n,\CC)$, but it can easily be modified to give
a faithful reducible representation into $\SL(n+2,\CC)$.

\begin{proposition} \label{prop:faithful}
Given any knot  $K\subset S^3$  there exists a faithful reducible representation of $\pi_1(X_k)/\pi_1(X_K)^{(2)}$.
\end{proposition}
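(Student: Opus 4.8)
The plan is to build a faithful representation of the metabelian quotient $\pi_1(X_K)/\pi_1(X_K)^{(2)} \cong \ZZ \ltimes H$, where $H = H_1(X_K;\ZZ[t^{\pm 1}])$, by exploiting the classification machinery developed above while departing from the irreducible case in one crucial way: the representations $\al_{(z,\chi)}$ are never faithful (indeed $\al(n,0)$ is scalar, so they kill $\mu^n$ up to the center), and unitary or semisimple representations cannot be faithful either, as noted in the discussion preceding the statement. So I would first reduce the problem to finding a faithful character-like object on $H$ and then assemble it with the $\ZZ$-action in a way that records the group structure non-diagonally.

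\textbf{Step 1: Separate points of $H$ by a single character.} Since $H$ is a finitely generated $\ZZ[t^{\pm 1}]$-module, it is in particular a finitely generated abelian group, hence a countable group. The first key step is to produce a single homomorphism $\chi : H \to \CC^*$ that is injective on $H$. This is possible because $H$ embeds into a product of copies of $\CC^*$ via its characters (characters separate points of any finitely generated abelian group), and a standard argument combines finitely many characters into one $\CC^*$-valued character by multiplying by generic transcendental or algebraically independent constants; concretely, pick generators, send them to multiplicatively independent elements of $\CC^*$, and verify injectivity using algebraic independence. Here the non-unitary freedom is essential: I would send a generator of the free part (recall multiplication by $t-1$ is an isomorphism, so $H$ can be infinite) to a transcendental number, which no unitary character could do.

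\textbf{Step 2: Build the representation of $\ZZ \ltimes H$.} Given an injective $\chi : H \to \CC^*$, I would form a block-type representation that remembers the $\ZZ$-coordinate. Unlike the irreducible model, where $\al(n,0)$ becomes scalar, I want the $\ZZ$-part to inject. The cleanest approach is to let $\ZZ$ act on a larger space so that $\mu = (1,0)$ maps to a matrix of infinite order. I would take the direct sum of a suitable diagonal representation of $H$ given by $\chi$ and its $t$-twists, together with a unipotent or scaling contribution along the $\ZZ$-direction: for instance, use the representation $(j,h) \mapsto$ the companion-type power times $\mathrm{diag}(\chi(h),\dots,\chi(t^{n-1}h))$ exactly as in $\al_{(z,\chi)}$, but now choose $z \in \CC^*$ \emph{not} a root of unity and large enough $n$, and then adjoin a second block where $(j,h)$ acts by an upper-triangular matrix $\bigl(\begin{smallmatrix} 1 & j \\ 0 & 1 \end{smallmatrix}\bigr)$ (a homomorphism from $\ZZ \ltimes H \to \ZZ \to \GL(2,\CC)$ via the abelianization $\ep$). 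The sum of these is reducible, which is exactly what the statement allows and what makes faithfulness attainable.

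\textbf{Step 3: Verify faithfulness and identify the obstacle.} I would check the kernel is trivial: if $\al(j,h) = I$, the unipotent block forces $j = 0$, and then the diagonal block reduces to $\mathrm{diag}(\chi(h),\dots,\chi(t^{n-1}h)) = I$, whence $\chi(h) = 1$, so $h = 0$ by injectivity of $\chi$. The main obstacle, and the step deserving the most care, is Step 1: guaranteeing an injective single $\CC^*$-valued character on $H$, since $H$ may have both a large free part and torsion. The torsion part is handled by Lemma \ref{lem:finite} (characters separate a finite abelian group), while the free part requires choosing multiplicatively independent transcendentals; the delicate point is checking that the combined character remains injective on all of $H$ simultaneously, which I would resolve by choosing the images of a generating set to be algebraically independent over $\QQ$ so that no nontrivial $\ZZ$-linear relation among generators can map to a multiplicative relation among their images. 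Once this character is in hand, the rest is routine matrix bookkeeping.
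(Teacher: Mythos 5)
Your Step 2 breaks down, and in a way that cannot be patched within your framework. The formula $\al_{(z,\chi)}$ is a homomorphism \emph{only} when $\chi$ factors through $H/(t^n-1)$: conjugation by the companion block cyclically shifts the diagonal entries, and the wrap-around position forces $\chi(t^n h)=\chi(h)$ for all $h$. An injective character can satisfy this only if $(t^n-1)H=0$, which fails for most knots (e.g.\ the figure-eight knot, where $H\cong \ZZ[t^{\pm1}]/(t^2-3t+1)$ and no power of $t$ acts as the identity, so no ``large enough $n$'' exists). Worse, the obstruction is not specific to this template: in any finite-dimensional representation of $\ZZ\ltimes H$ in which $H$ acts diagonalizably, the element $T=\al(1,0)$ permutes the weight spaces via $\chi\mapsto t\chi$, so each of the finitely many weights satisfies $t^k\chi=\chi$ for some $k\ge 1$ and hence kills $(t^k-1)H$; the restriction to $H$ therefore has kernel containing $(t^K-1)H$ for some $K$ and is faithful only when some $t^K$ acts as the identity on $H$. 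Thus faithfulness forces a \emph{non-semisimple} action of $H$, and the plan ``injective character plus the standard diagonal model'' cannot succeed. The paper's proof embraces exactly this point: it uses that $H$ is $\ZZ$-torsion free to embed it in $H\otimes\CC$, decomposes the latter over the PID $\CC[t^{\pm1}]$ into cyclic pieces $\CC[t^{\pm1}]/(t-z)^r$, and on each piece lets $H$ act by \emph{unipotent} upper-triangular matrices recording the coefficients $a_0,\dots,a_{r-1}$, with $(1,0)$ acting by a scalar of infinite order times a Jordan-type block. Your auxiliary $\bigl(\begin{smallmatrix}1&j\\0&1\end{smallmatrix}\bigr)$ block correctly handles the $\ZZ$-coordinate, but it does nothing to repair the $H$-part.

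There are also two inaccuracies in Step 1. First, $H$ need not be finitely generated as an abelian group: it is a finitely generated $\ZZ[t^{\pm1}]$-module, but when $\Delta_K$ is not monic (e.g.\ $\Delta_{6_1}(t)=2t^2-5t+2$, contributing a summand isomorphic to $\ZZ[1/2]$) the underlying abelian group is infinitely generated. Second, ``characters separate points'' does not yield a single injective character: a finite abelian group embeds in $\CC^*$ only if it is cyclic, since $\CC^*$ has a unique subgroup of each finite order, so Lemma \ref{lem:finite} would not handle a hypothetical torsion part. An injective character $H\to\CC^*$ does in fact exist, but because $H$ is $\ZZ$-torsion free and countable (hence embeds in a $\QQ$-vector space, which embeds in $\CC^*$) --- a fact you never invoke. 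In any case, by the first paragraph such a character cannot be fed into the diagonal construction, so repairing Step 1 would not rescue the argument.
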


Note that representations similar to the ones used in our proof appear also in \cite{Je07}.

\begin{proof}
It is well--known that $H_1(X_K;\ZZ[t^{\pm 1}])$ is $\ZZ$--torsion free, in particular we get an injection
$H_1(X_K;\ZZ[t^{\pm 1}])\to  H_1(X_K;\ZZ[t^{\pm 1}])\otimes \CC =H_1(X_K;\CC[t^{\pm 1}])$.
Since $\CC[t^{\pm 1}]$ is a PID, we have an isomorphism
$$H_1(X_K;\CC[t^{\pm 1}]) \cong \bigoplus\limits_{i=1}^n  \CC[t^{\pm 1}]/(t-z_i)^{r_i}$$ of
$ \CC[t^{\pm 1}]$--modules for some
$z_i\in \CC, r_i \in \NN$.
Note that this gives rise to an injective group homomorphism
\[ \ZZ \ltimes H_1(X_K;\CC[t^{\pm 1}]) \to  \prod\limits_{i=1}^n \ZZ \ltimes \CC[t^{\pm 1}]/(t-z_i)^{r_i},\]
here $n\in \ZZ$ acts on $\CC[t^{\pm 1}]/(t-z_i)^{r_i}$ by multiplication by $z_i^n=t^n$.
By taking direct sums of representations it therefore suffices to find a faithful representation of
$\ZZ \ltimes \CC[t^{\pm 1}]/(t-z)^{r}$ for some $z\in \CC, r\in \NN$.
Given an element $p\in \CC[t^{\pm 1}]/(t-z)^{r}$ there exist unique $a_0,\dots,a_{r-1}\in \CC$
such that $p$ is represented by $\sum_{i=0}^{r-1} a_i(t-z)^i$.
Let $x\in S^1$ an element of infinite order. We then consider the representation
$ \al: \ZZ\ltimes \CC[t^{\pm 1}]/(t-z)^{r}\to  \GL(r+1,\CC)$ defined by
\[
(0,p)\mapsto
\begin{pmatrix}
1 & a_0&a_1&\cdots&a_{r-1} \\
0 & 1 & 0 &\cdots & 0 \\
\vdots & &1&&\vdots \\
&&&\ddots& 0\\
0&\cdots &&0&1
\end{pmatrix}
\mbox{ and } (1,0)\mapsto
x \cdot \begin{pmatrix}
1 & 0&0&\cdots&0 \\
0 & z & 1 &\cdots & 0 \\
\vdots & &z&\ddots&\vdots \\
&&&\ddots& 1\\
0&\cdots &&0&z
\end{pmatrix}.
\]
The group structure on $\ZZ\ltimes \CC[t^{\pm 1}]/(t-z)^{r}$ is given by $(n',h')+(n,h) = (n'+n, z^{n}h' + h).$
Using this, it is not difficult to check that $\al$ is indeed a representation.
It is easy to verify that  $\al$ is also a  faithful representation.
\end{proof}


\begin{thebibliography}{10000}

\bibitem[AHJ07]{AHJ07}
L. Abdelghani, M. Heusener, and H. Jebali,
{\em Deformations of metabelian representations of knot groups into $SL(3,\mathbb{C})$}, {arXiv:math.GT/0710.3511}.
\bibitem[BZ05]{BZ05}
S. Boyer and X. Zhang,
{\em Every nontrivial knot in $S^3$ has nontrivial $A$-polynomial,} Proc. Amer. Math. Soc.  113 (2005), 2813--2815.
\bibitem[BuZi85]{BuZi85}
G. Burde and  H Zieschang, {Knots,} de Gruyter Studies in Mathematics 5,   Walter de Gruyter, Berlin, 1985.
\bibitem[CS83]{CS83}
M. Culler and P. B. Shalen,
{\em Varieties of group representations and splittings of 3-manifolds,} Annals of Math. 117 (1983), 109--146.
\bibitem[DG04]{DG04}
N. M. Dunfield and S. Garoufalidis,
{\em  Nontrviality of the $A$-polynomial for knots in $S^3$,} Algebr. Geom. Topol. 4 (2004), 1145--1153.
\bibitem[FS92]{FS92}
A. F\"assler and E. Stiefel,
{\em Group theoretical methods and their applications},  Birkh\"auser Boston, Inc., Boston, MA, 1992
\bibitem[Fr03]{Fr03}
S. Friedl,
{\em Eta invariants as sliceness obstructions and their relation to Casson-Gordon invariants}, thesis, Brandeis University (2003)
\bibitem[Fr04]{Fr04}
S. Friedl,
{\em Eta invariants as sliceness obstructions and their relation to Casson-Gordon invariants}, Algebr. Geom. Topol. 4 (2004) 893-934.
\bibitem[Fn93]{Fn93} C. Frohman,
{\em Unitary representations of knot groups}, Topology 32 (1993), 121--144.
\bibitem[GS91]{GS91}
F. Gonz\'alez-Acu\~na and H. Short, {\em Cyclic branched coverings of knots and homology
spheres}, Revista Math. 4 (1991), 97 - 120.
\bibitem[Go72]{Go72}
C. McA. Gordon,
{\em Knots whose branched cyclic coverings have periodic homology}, Trans. Amer. Math. Soc. 168 (1972), 357--370.
\bibitem[Go77]{Go77}
C. McA. Gordon,
{\em Some aspects of classical knot theory}, Knot theory (Proc. Sem., Plans-sur-Bex, 1977), pp. 1--60, Lecture Notes in Math., 685, Springer, Berlin, 1978.
\bibitem[GL89]{GL89}
C. McA. Gordon and J. Luecke,
{\em Knots are determined by their complements}, J. Amer. Math. Soc. 2 (1989), 371--415.
\bibitem[Ha79]{Ha79}
R. Hartley,
{\em Metabelian representations of knot groups},  Pacific J. Math.  82  (1979), no. 1, 93--104.
\bibitem[Je07]{Je07}
H. Jebali,
{\em Module d'Alexander et repr\'esentations m\'etab\'eliennes}, {arXiv:math.GT/0709.2306}.
\bibitem[Kl91]{Kl91}
E. Klassen,  {\em Representations of knot groups in SU(2)}, Trans. Amer. Math. Soc. 326 (1991),  795--828.
\bibitem[Li01]{Li01}
X. S. Lin,
{\em Representations of knot groups and twisted Alexander polynomials}, Acta Math. Sin. (Engl. Ser.)  17,  no. 3: 361--380 (2001).
\bibitem[Ln02]{Ln02}
C. Livingston,
{\em Seifert forms and concordance},  Geom. \& Topol. 6 (2002) 403-408.
\bibitem[Na07]{Na07}
F. Nagasato,
{\em Finiteness of a section of the $\SL(2,\CC)$-character variety of knot groups}, to appear in Kobe Journal of Mathematics, Vol. 24, no. 2 (2007).
\bibitem[Ri90]{Ri90}
R. Riley, {\em Growth of order of homology of cyclic branched covers of knots}, Bull.
London Math. Soc. 22 (1990), 287 - 297.
\bibitem[SW02]{SW02}
D. Silver, S. Williams, {\em Mahler measure, links and homology growth}, Topology 41 (2002), 979-991.
\bibitem[Wh87]{Wh87}
W. Whitten,
{\em Knot complements and groups}, Topology 26 (1987), 41--44.

\end{thebibliography}
\end{document}